\title{A Lower Bound for the Number of Group Actions on a Compact Riemann Surface}
\author{James W. Anderson\\School of Mathematics \\University of Southampton\\Southampton SO17 1BJ, UK\\j.w.anderson@soton.ac.uk\\ \\Aaron Wootton\\Department of Mathematics\\University of Portland\\Portland, OR 97203, USA\\wootton@up.edu}
\newtheorem{theorem}{Theorem}[section]
\newtheorem{conjecture}[theorem]{Conjecture}
\newtheorem{corollary}[theorem]{Corollary}
\newtheorem{lemma}[theorem]{Lemma}
\newtheorem{proposition}[theorem]{Proposition}
\newtheorem{definition}[theorem]{Definition}
\begin{document}

\maketitle

\begin{abstract}
We prove that the number of distinct group actions on compact Riemann surfaces of a fixed genus $\sigma \geq 2$ is at least quadratic in $\sigma$.  We do this through the introduction of a coarse signature space, the space $\mathcal{K}_\sigma$ of {\em skeletal signatures} of group actions on compact Riemann surfaces of genus $\sigma$.  We discuss the basic properties of $\mathcal{K}_\sigma$ and present a full conjectural description. 
\end{abstract}

\section{Introduction}
\label{introduction}

The purpose of this note is to prove (Theorem \ref{thm-main}) that there exist at least $\frac{1}{4} (k_\sigma+1)(k_\sigma+3)$ distinct actions of groups of conformal automorphisms on compact Riemann surfaces of a fixed genus $\sigma \geq 2$, where $k_\sigma = \lfloor \frac{\sigma}{3}\rfloor$.  We start by putting this result in context. 

Let $X$ be a compact Riemann surface of genus $\sigma \ge 2$ and let $G$ be a group of conformal automorphisms acting on $X$.  The signature of the action is the tuple $(h; n_{1}, \ldots, n_{r})$, where the quotient space $X/G$ has genus $h$ and the quotient map $\pi \colon X\rightarrow X/G$ is branched over $r$ points with orders $n_1,\ldots, n_r$.    We work here with the definition that the actions of groups $G_{1}$ and $G_{2}$ on compact Riemann surfaces $X_{1}$ and $X_{2}$ (of the same genus $\sigma$) are equivalent if $G_1$ is isomorphic to $G_{2}$ and if the signatures of the actions of $G_{1}$ on $X_1$ and of $G_{2}$ on $X_2$ are equal.  

The {\em counting problem} we are interested in is to count the number of equivalence classes of such actions.    Though an interesting question in its own right, one of the primary motivations for our work comes from the closely related problem of counting conjugacy classes of finite subgroups of the mapping class group ${\rm MCG}(S)$ for a closed orientable surface $S$ of genus $\sigma$.   Specifically, a consequence of the solution to the Nielsen Realization Problem (see Kerckhoff \cite{kerckhoff}) is that there is a one-to-one correspondence between the conjugacy classes of finite subgroups of ${\rm MCG}(S)$ and the number of distinct group actions on $S$ (up to isotopy). Though the equivalence of group actions we are considering is coarser, it still provides a lower bound for the number of such actions and hence for the number of distinct conjugacy classes of finite subgroups of ${\rm MCG}(S)$.

If $G$ acts on $X$ with a given signature $(h; n_{1}, \ldots, n_{r})$, then the Riemann-Hurwitz formula (see Section \ref{preliminaries}) is satisfied.   For a fixed $\sigma$ and a fixed order $|G|$ of $G$, there are then only finitely many solutions to the Riemann-Hurwitz formula.  This gives the very crude estimate that the number of groups that can act on some such $X$ is finite, being the number of groups of order at most the Hurwitz bound of $84(\sigma -1)$ multiplied by the number of possible signatures satisfying the Riemann-Hurwitz formula. However, this estimate is unreasonably crude. 

The main tool we use is the space $\mathcal{K}_\sigma$ of {\em skeletal signatures} for actions of groups on Riemann surfaces of genus $\sigma$.  A skeletal signature of an action of a group $G$ on a Riemann surface $X$ is the ordered pair $(h_0, r_0)$ where $h_0$ is the genus of the quotient $X/G$ and $r_0$ is the number of branch points for the covering $X\rightarrow X/G$.  We provide a detailed discussion of the basic properties of $\mathcal{K}_\sigma$ in Section \ref{properties}, and provide a complete conjectural picture.   The proof of Theorem \ref{thm-main} proceeds by showing that there are quadratically many (in $\sigma$) different skeletal signatures corresponding to actions of the cyclic group $C_4$ of order $4$ on closed Riemann surfaces of genus $\sigma$.

Actions of finite groups on Riemann surfaces have been extensively studied, and we do not provide here a full survey of what is known. There are a number of previous and current results closely related to this project. One approach to the counting problem is to fix a genus and attempt to classify all groups which can act on a surface of that genus.  For example, the numbers of distinct topological group actions on surfaces of genus $2$ and $3$ were determined in Broughton \cite{Bro1}, and there are many other results for other small genera, see for example Bogopol'ski\u\i \cite{Bog} and Kuribayashi and Kimura \cite{Kur}. 

More recently, dramatically extending these results, Breuer \cite{Breu} determined the number of distinct group actions for each surface of genus $\sigma$ for $2\leq \sigma \leq 48$. Though these results are extremely impressive, the difficulty of enumerating the total number of distinct group actions on a surface of a fixed genus increases quickly as the genus increases. In particular, there seems little hope that one would be able to give an exact answer to the counting question. 

A different approach to classification of group actions is to instead consider the problem of classifying special families of groups. For example, the number of distinct cyclic group actions of prime order up to topological equivalence on a surface of genus $\sigma$ was determined in Harvey \cite{Har}, and methods to derive similar results for elementary abelian groups were given in Broughton and Wootton \cite{Bro2}.  Many other results exist for other families of groups, for example Tyszkowska \cite{Tys1}. 

We would like to thank the referee for their careful reading of the paper.  The first author would like to thank the University of Portland and the second author would like to thank the University of Southampton for their hospitality during the various visits made during the work on this paper. 

\section{Preliminaries}
\label{preliminaries}

In this Section, we introduce the necessary preliminaries.  

We begin with some notation.  Let $(x)$ be the result of rounding the real number $x >0$ to the nearest integer.   For a natural number $n$, let $C_n$ be the cyclic group of order $n$.   For an arbitrary group $G$, let $e_G$ be its identity element and let $|G|$ denote the order of $G$.

\begin{definition}
Let $G$ be a finite group and let $X$ be a compact Riemann surface of genus $\sigma \geq 2$. We say that $G$ {\em acts on $X$ with signature} $(h; n_1, \ldots, n_r)$ if the elements of $G$ are conformal automorphisms of $X$, the quotient space $X/G$ has genus $h$ and the quotient map $\pi \colon X\rightarrow X/G$ is branched over $r$ points with orders $n_1,\ldots, n_r$. 
\end{definition}

There is an alternative notation for signature that we will have occasion to use, in which we organize the branch points by grouping them together by order.  In this case, we say that $G$ {\em acts on $X$ with signature} $(h; [n_1,t_1],\ldots, [n_s,t_s]) $ if the quotient $X/G$ has genus $h$ and the quotient map $\pi \colon X\rightarrow X/G$ is branched over $t_j$ points with order $n_j$ for each $1\le j \le s$.

It is standard that if $G$ acts on $X$ with signature $(h; n_1, \ldots, n_r)$, then the {\em Riemann-Hurwitz formula} is satisfied:
$$\sigma -1=|G|(h-1) +\frac{|G|}{2} \sum_{j=1}^{r} \bigg( 1-\frac{1}{n_{j}} \bigg).$$ 

The natural question that arises is then to ask, if a signature satisfies the Riemann-Hurwitz formula for a given $\sigma\ge 2$, what additional information is needed to conclude that the signature arises from the action of a group $G$ on some compact Riemann surface of genus $\sigma$.  For this, we need the following definition.

\begin{definition}
Let $G$ be a finite group.  A vector $(a_{1},b_{1},a_{2},b_{2},\dots ,a_{n},b_{n},c_{1},\dots ,c_{r})$ of elements of $G$ is an {\em $(h;n_1,\ldots, n_r)$-generating vector for $G$} if the following hold:
\begin{enumerate}
\item $G=\langle a_{1},b_{1},a_{2},b_{2},\dots ,a_n,b_n,c_{1},\dots ,c_{r} \rangle$.
\item The order of $c_{j}$ is $n_{j}$ for $1\leq j\leq r$.
\item $\prod_{i=1}^{n} [a_{i} ,b_{i} ] \prod_{j=1}^{r} c_{j} = e_G$.
\end{enumerate}
\end{definition}

We note that this definition of a generating vector mimics the properties of a standard generating set for the fundamental group of a closed orientable surface.  For a discussion of the following Theorem, see for instance Broughton \cite{Bro1}.  

\begin{theorem}
\label{thm-setup2}
A finite group $G$ acts on a compact Riemann surface $X$ of genus $\sigma \geq 2$ with signature $(h;n_{1},\dots ,n_{r})$ if and only the Riemann-Hurwitz formula holds and there exists an $(h;n_{1},\dots ,n_{r})$-generating vector for $G$.
\end{theorem}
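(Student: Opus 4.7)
The plan is to route the argument through the uniformization of $X$ by a Fuchsian group. Since $\sigma\ge 2$, I can write $X=\mathbb{H}/K$ for a torsion-free cocompact Fuchsian (surface) group $K$, and I will compare $K$ with the larger Fuchsian group obtained by lifting the $G$-action.

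For the necessity direction, suppose $G$ acts on $X$ with signature $(h;n_1,\dots,n_r)$. Lift the action to $\mathbb{H}$ to obtain a cocompact Fuchsian group $\Gamma$ with $K\triangleleft\Gamma$ and $\Gamma/K\cong G$, so that $\mathbb{H}/\Gamma\cong X/G$ has genus $h$. The $r$ branch points of $\pi\colon X\to X/G$ correspond exactly to the $\Gamma$-conjugacy classes of maximal finite cyclic subgroups of $\Gamma$, and the stabilizer orders give $n_1,\dots,n_r$; consequently $\Gamma$ is a Fuchsian group of signature $(h;n_1,\dots,n_r)$, with the standard presentation
\[
\Gamma=\langle A_1,B_1,\dots,A_h,B_h,C_1,\dots,C_r \mid C_j^{n_j}=1,\ \textstyle\prod_i[A_i,B_i]\prod_j C_j=1\rangle.
\]
Pushing the generators forward to $G=\Gamma/K$ yields an $(h;n_1,\dots,n_r)$-generating vector; torsion-freeness of $K$ ensures that the images of the $C_j$ retain order exactly $n_j$. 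The Riemann--Hurwitz formula drops out of the Euler characteristic computation for $\mathbb{H}/K\to\mathbb{H}/\Gamma$.

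For the sufficiency direction, assume Riemann--Hurwitz holds and that an $(h;n_1,\dots,n_r)$-generating vector $(a_i,b_i,c_j)$ exists. The Riemann--Hurwitz equation together with $\sigma\ge 2$ implies that $(h;n_1,\dots,n_r)$ is a valid Fuchsian signature (the corresponding hyperbolic area is positive), so a Fuchsian group $\Gamma$ with this signature exists. Define $\phi\colon\Gamma\to G$ on the standard generators by $A_i\mapsto a_i$, $B_i\mapsto b_i$, $C_j\mapsto c_j$; condition (2) makes the elliptic relations go to the identity, condition (3) makes the long relation go to the identity, so $\phi$ is a well-defined homomorphism, and condition (1) makes it surjective. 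Set $K=\ker\phi$ and $X=\mathbb{H}/K$; then $G=\Gamma/K$ acts conformally on $X$ with quotient $\mathbb{H}/\Gamma$, and the branching data of $X\to X/G$ reads off the signature of $\Gamma$.

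The technical heart of the proof is verifying that $K$ is torsion-free, which is what guarantees that $X$ is genuinely a smooth Riemann surface (of the desired genus $\sigma$, by R--H) and that the branch orders really are $n_1,\dots,n_r$. I would use the standard classification of torsion in a Fuchsian group: every finite-order element of $\Gamma$ is conjugate to a nontrivial power $C_j^k$ with $1\le k<n_j$. Since $\phi(C_j)=c_j$ has order exactly $n_j$ by condition (2), no such $C_j^k$ lies in $K$, so $K$ has no torsion. Once this is in hand, the rest is bookkeeping, and the two directions together give the theorem.
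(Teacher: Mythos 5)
Your argument is correct and complete. The paper does not actually prove this theorem --- it defers to Broughton's survey --- and what you have written is precisely the standard uniformization argument found there: realize $X$ as $\mathbb{H}/K$ for a torsion-free cocompact surface group $K$, pass back and forth between the $G$-action and a cocompact Fuchsian group $\Gamma$ of signature $(h;n_{1},\dots ,n_{r})$ containing $K$ as a normal subgroup with $\Gamma/K\cong G$, and note that torsion-freeness of the kernel of $\Gamma\to G$ is exactly equivalent to the elements $c_{j}$ having full order $n_{j}$, which is what makes the quotient a genuine surface with the stated branch orders. You have correctly identified and handled the one technical point (torsion-freeness of $K$, via the classification of finite-order elements of $\Gamma$ as conjugates of powers of the $C_{j}$), and the Riemann--Hurwitz formula falls out of the area comparison as you say; nothing further is needed.
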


As one would expect, for an arbitrary signature $(h;n_1,\ldots, n_r)$ and an arbitrary finite group $G$, the general problem of determining whether or not there exists an $(h;n_1,\ldots, n_r)$-generating vector for $G$ is very difficult. Therefore, instead of attempting to enumerate group actions using generating vectors, we attack the potentially easier question of counting the number of {\em skeletal signatures} for a given genus $\sigma$, defined as follows.

\begin{definition}
An ordered pair $(h,r)$ of non-negative integers is {\em a skeletal signature for genus $\sigma \geq 2$} if there exists a compact Riemann surface $X$ of genus $\sigma$ and a finite group $G$ acting on $X$ with signature $(h;n_{1},\dots ,n_{r})$ for some $n_1,\ldots, n_r\ge 2$.  We denote the set of all skeletal signatures for genus $\sigma$ by $\mathcal{K}_\sigma$.
\end{definition}

We note that the actual orders of the branch points are not important for the definition of a skeletal signature.  As such, the collection of possible skeletal signatures corresponding to a given genus $\sigma$ provides a crude signature space, containing a part of the information carried by the space of all signatures, in a way that is more directly amenable to analysis.  We introduce skeletal signatures as an intermediate step in our counting problem, because directly attacking the question of counting all of the non-equivalent group actions on Riemann surfaces of a fixed genus $\sigma$, or even directly counting all of the possible signatures arising from such group actions, is at present an intractable problem.  

\section{Properties of $\mathcal{K}_\sigma$}
\label{properties}

In this Section, we consider some basic properties of $\mathcal{K}_\sigma$ for closed Riemann surfaces of genus $\sigma\ge 2$.   

We first note that the line with equation $r = -4h + 2\sigma + 2$ is naturally associated to the hyperelliptic involution.  We refer to this line as the {\em hyperelliptic line}.  Geometrically, the hyperelliptic involution can be viewed as taking the surface $X$ in ${\bf R}^3$ and arranging it so that there is an axis $L$ passing through all of the handles of the surface.  This axis intersects the surface in $2\sigma + 2$ points, with $2\sigma$ of the points coming from the passage of the axis through the $\sigma$ handles and the remaining 2 points being the extreme points of the intersection of $L$ with $X$.  Rotation by $\pi$ around $L$ yields a surface with genus $0$ and $2\sigma +2$ branch points of order 2 as a quotient.  By moving $2h$ handles off the axis in a way that is symmetric with respect to the involution, we obtain the action on $X$ for which the genus of the quotient is $h$ and the number of branch points is $2\sigma +2 - 4h$.

Define the triangular region $T_\sigma$ to be the region bounded by the axes $\{ h=0\}$ and$\{ r=0\}$, and the hyperelliptic line $\{ 2\sigma+2-4h = r\}$.    

\begin{lemma} The skeletal signature space $\mathcal{K}_\sigma$ is contained in $T_\sigma$.
\label{triangular}
\end{lemma}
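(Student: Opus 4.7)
The plan is to verify the three inequalities defining $T_\sigma$, namely $h\ge 0$, $r\ge 0$, and $4h+r\le 2\sigma+2$, for every skeletal signature $(h,r)\in\mathcal{K}_\sigma$. The first two are immediate from the meanings of $h$ (a genus) and $r$ (a count of branch points), so essentially all of the content of the lemma lies in the hyperelliptic bound $4h+r\le 2\sigma+2$.

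To establish that bound I would feed the Riemann-Hurwitz formula the data that is built into the definition of a skeletal signature. Fix a group $G$ of order at least $2$ acting with signature $(h;n_1,\dots,n_r)$ on a genus-$\sigma$ surface that realizes $(h,r)$. Since each $n_j\ge 2$, every term $1-1/n_j$ in the Riemann-Hurwitz sum is at least $1/2$, and the formula therefore gives
$$\sigma-1 \;\ge\; |G|(h-1) + \frac{|G|\,r}{4} \;=\; \frac{|G|}{4}\bigl(4h+r-4\bigr).$$
When $4h+r\ge 4$, the parenthesized factor is non-negative, so the assumption $|G|\ge 2$ yields $\sigma-1\ge (4h+r-4)/2$, which rearranges to $4h+r\le 2\sigma+2$. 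When instead $4h+r\le 3$, the same bound is automatic, since $\sigma\ge 2$ forces $2\sigma+2\ge 6$.

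The only subtle point, and the one place I would pause, is the status of the trivial group. A trivial action would produce the signature $(\sigma;)$ and hence the skeletal signature $(\sigma,0)$, which satisfies $4h+r=4\sigma>2\sigma+2$ for $\sigma\ge 2$. So either the standard convention that a ``group action'' in this context is nontrivial needs to be invoked, or the trivial group has to be ruled out explicitly. Beyond this, the proof is a one-line Riemann-Hurwitz estimate with no combinatorial obstacles; I would not expect any hidden difficulty.
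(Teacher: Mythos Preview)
Your argument is correct and is essentially the same as the paper's: both proofs combine the Riemann--Hurwitz formula with the estimates $1-\tfrac{1}{n_j}\ge\tfrac12$ and $|G|\ge 2$ to obtain the hyperelliptic bound. The paper phrases this as ``for fixed $h_0$, the value of $r_0$ is maximized when all $n_j=2$ and $|G|=2$,'' whereas you run the same idea as a direct inequality chain; your version is slightly more careful in separating off the case $4h+r\le 3$ and in flagging that the trivial group must be excluded (which the paper assumes tacitly when it asserts that the smallest possible $|G|$ is $2$).
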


\begin{proof}  We proceed naively.   Let $(h_0, r_0)$ be a point in $\mathcal{K}_\sigma$ arising from the signature $(h_0; n_1,\ldots, n_{r_0})$.  We recall the Riemann-Hurwitz formula:
\[ \sigma -1=|G|(h_0-1) +\frac{|G|}{2} \sum_{j=1}^{r_0} \bigg( 1-\frac{1}{n_j} \bigg) = |G| \bigg(h_0-1 +\frac{1}{2} \sum_{j=1}^{r_0} \bigg( 1-\frac{1}{n_j} \bigg) \bigg). \]
Note that the left hand side is fixed.  For a given $h_0$, we maximize $r_0$; this will give the highest potential skeletal signature on the vertical line $\{ h =h_0\}$. We can see that in order to maximize $r_0$, we need to maximize the number of terms in the sum, and hence minimize each term $1-\frac{1}{n_j}$ in the sum, and this minimum occurs when each $n_j = 2$.  This gives that the maximum value of $r_0$ satisfies
\[ \sigma -1= |G| \bigg( h_0-1 +\frac{1}{2} \sum_{j=1}^{r_0} \bigg( 1-\frac{1}{2} \bigg) \bigg) = |G| \bigg( h_0-1 +\frac{r_0}{4}  \bigg). \]
Since the product $|G| \bigg( h_0-1 +\frac{r_0}{4}  \bigg)$ is constant, we see that $r_0$ is maximized when $|G|$ is minimized, and the smallest possible value of $|G|$ is 2.  Hence, for a given value of $h_0$, the maximum value of $r_0$ satisfies $\sigma -1= 2 \bigg( h_0-1 +\frac{r_0}{4}  \bigg)$, which exactly yields the line $r = -4h + 2\sigma + 2$, as desired.
\end{proof}

This immediately gives the following upper bound on the number of points in $\mathcal{K}_\sigma$.

\begin{corollary} The number of points in $\mathcal{K}_\sigma$ for $\sigma\ge 2$ is at most quadratic in $\sigma$.
\label{quadratic (h,r) upper bound}
\end{corollary}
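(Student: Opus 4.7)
The plan is to combine Lemma \ref{triangular} with an elementary lattice point count. Since a skeletal signature is by definition an ordered pair of non-negative integers, $\mathcal{K}_\sigma$ is contained in the set of integer lattice points of the triangle $T_\sigma$ bounded by $h = 0$, $r = 0$, and $r = -4h + 2\sigma+2$. So it suffices to bound the number of such lattice points by a quadratic function of $\sigma$.

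First I would identify the vertices of $T_\sigma$, namely $(0,0)$, $(0, 2\sigma+2)$, and $\left( \tfrac{\sigma+1}{2}, 0 \right)$. Then for each integer $h$ with $0 \le h \le \lfloor \tfrac{\sigma+1}{2} \rfloor$, I would count the integer values of $r$ with $0 \le r \le 2\sigma + 2 - 4h$, of which there are exactly $2\sigma + 3 - 4h$. Summing, the total number of lattice points in $T_\sigma$ is
\[
\sum_{h=0}^{\lfloor (\sigma+1)/2 \rfloor} (2\sigma + 3 - 4h),
\]
which is an arithmetic sum with roughly $\sigma/2$ terms, each bounded by $2\sigma+3$, and therefore is $O(\sigma^2)$. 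Since $\mathcal{K}_\sigma$ is a subset of this lattice, $|\mathcal{K}_\sigma|$ is at most quadratic in $\sigma$, as claimed.

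There is no real obstacle here; the corollary is a direct consequence of the containment $\mathcal{K}_\sigma \subseteq T_\sigma$ established in Lemma \ref{triangular}, together with the fact that a triangle with side lengths linear in $\sigma$ contains quadratically many lattice points. The only mildly fussy point is tracking the exact upper endpoint $\lfloor (\sigma+1)/2 \rfloor$ of the $h$-range, but this does not affect the order of growth.
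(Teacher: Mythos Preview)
Your proposal is correct and follows essentially the same approach as the paper: both invoke Lemma~\ref{triangular} and then count the integer lattice points in $T_\sigma$ column by column over $h$. The only difference is cosmetic---the paper evaluates the arithmetic sum to the explicit closed form $\tfrac{1}{2}(\sigma+2)(\sigma+3)$, whereas you stop at the $O(\sigma^2)$ estimate, which is all the statement requires.
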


\begin{proof} We count the number of integer lattice points contained in $T_\sigma$.  By maximizing the genus of a possible quotient surface, we see that the rightmost skeletal signature $R$ in $\mathcal{K}_\sigma$ occurs either at $(\frac{1}{2}\sigma, 2)$ for $\sigma$ even or at $(\frac{1}{2}(\sigma +1), 0)$ for $\sigma$ odd.   Taking the appropriate upper limit for the outer sum (depending on the parity of $\sigma)$, we see that the number of skeletal signatures in $T_\sigma$ is 
\[ \sum_{h=0}^R \sum_{r=0}^{2\sigma+2-4h} 1 = \frac{1}{2}(\sigma+2)(\sigma+3).  \]
\end{proof}

The primary question of interest, given a point $(h_0,r_0)\in T_\sigma$, is whether $(h_0, r_0)$ lies in $\mathcal{K}_\sigma$; that is, whether or not $(h_0, r_0)$ is the skeletal signature of the action of some finite group $G$ on some Riemann surface $X$ of genus $\sigma$.  We note that this question is equivalent to asking whether there exists any group action on a compact Riemann surface of genus $\sigma$ with quotient of genus $h_0$ and where the natural branched cover is branched over $r_0$ points.  Hence, any reasonable analysis that allows us to exclude points from $\mathcal{K}_\sigma$ will thus allow us to exclude a large number of theoretically possible signatures and quotients.

There are a number of finer questions that follow from this primary question, such as whether a given point $(h_0, r_0)$ satisfies $(h_0,r_0)\in \mathcal{K}_\sigma$ for finitely or infinitely many $\sigma$, or even for all $\sigma$.  Before moving onto the proof of the lower bound on the size of $\mathcal{K}_\sigma$ in Section \ref{lower bound}, we discuss these finer questions.

To start, we observe that the the order of the group $G$ giving rise to a skeletal signature $(h_0, r_0)\in \mathcal{K}_\sigma$ is very roughly inversely proportional to the distance from $(h_0, r_0)$ to the origin $(0,0)$.   Given $N\ge 2$, let $L_{\sigma, N}$ be the triangular region bounded by the axes $\{ h=0\}$ and $\{ r=0\} $, and the line $\{ r=4\left( \frac{\sigma -1+N}{N} \right) -4h\}$.  Note that $L_{\sigma, N}\subset T_\sigma$ and in fact $L_{\sigma, 2}= T_\sigma$.

\begin{proposition} Fix a positive integer $N$ and a genus $\sigma\ge 2$. Then all skeletal signatures in $\mathcal{K}_\sigma$ for any group $G$ with $|G|\geq N$ lie in $L_{\sigma, N}$.
\label{prop-tri}
\end{proposition}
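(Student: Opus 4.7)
The plan is to adapt the argument from the proof of Lemma \ref{triangular} essentially verbatim, the only change being that we now have the additional hypothesis $|G|\ge N$ to exploit. Given a skeletal signature $(h_0, r_0)\in \mathcal{K}_\sigma$ arising from a signature $(h_0; n_1,\ldots, n_{r_0})$ of a $G$-action, the Riemann-Hurwitz formula reads
\[ \sigma -1 = |G|\bigg(h_0-1 + \frac{1}{2}\sum_{j=1}^{r_0}\Big(1-\tfrac{1}{n_j}\Big)\bigg). \]
As in Lemma \ref{triangular}, I would first fix $h_0$ and $|G|$ and maximize $r_0$; since each term $1 - 1/n_j \ge 1/2$ with equality when $n_j=2$, maximizing the number of summands forces us to set every $n_j = 2$, yielding the relation $\sigma - 1 = |G|(h_0 - 1 + r_0/4)$, or equivalently
\[ r_0 \le \frac{4(\sigma-1)}{|G|} + 4 - 4h_0. \]

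The second step is to apply the hypothesis $|G|\ge N$, which makes the right-hand side largest when $|G|=N$, giving
\[ r_0 \le \frac{4(\sigma-1)}{N} + 4 - 4h_0 = 4\left(\frac{\sigma-1+N}{N}\right) - 4h_0. \]
This is precisely the bounding line of $L_{\sigma, N}$. Combined with the obvious constraints $h_0\ge 0$ and $r_0 \ge 0$ (which are built into the definition of a skeletal signature, since both the genus of the quotient and the number of branch points are non-negative integers), this places $(h_0, r_0)$ inside the triangle $L_{\sigma, N}$.

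There is no real obstacle here: the argument is a direct one-parameter refinement of Lemma \ref{triangular}, and the only item worth checking carefully is that the claimed line really does reduce to the hyperelliptic line when $N = 2$, which confirms the remark $L_{\sigma, 2} = T_\sigma$ made just before the statement of the proposition. Verifying this is a straightforward substitution: $4\left(\frac{\sigma-1+2}{2}\right) - 4h = 2\sigma + 2 - 4h$, exactly as in Lemma \ref{triangular}.
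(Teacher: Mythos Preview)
Your proposal is correct and follows essentially the same approach as the paper: both apply Riemann--Hurwitz, use the bound $1-\tfrac{1}{n_j}\ge \tfrac{1}{2}$, and then invoke $|G|\ge N$ to arrive at the line $r=4\left(\frac{\sigma-1+N}{N}\right)-4h$. The only cosmetic difference is the order of the two estimates (the paper replaces $|G|$ by $N$ first and then bounds the sum, while you do the reverse), and your added sanity check that $N=2$ recovers the hyperelliptic line is a nice touch not present in the paper's proof.
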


\begin{proof} Suppose that $(h_0,r_0)$ is a skeletal signature corresponding to a group $G$ with $|G|\ge N$ acting on a compact Riemann surface $X$ of genus $\sigma$ with signature $(h_0; n_{1},\dots ,n_{r_0})$. Applying the Riemann-Hurwitz formula and using the fact that $1-\frac{1}{n_{i}} \geq \frac{1}{2}$ (as $n_{i}\geq 2$), we see that 
\begin{eqnarray*}
\sigma -1 & = & |G|(h_0-1) +\frac{|G|}{2} \sum_{i=1}^{r_0} \left( 1-\frac{1}{n_{i}} \right) \\
& \geq & N(h_0-1) +\frac{N}{2} \sum_{i=1}^{r_0} \left( 1-\frac{1}{n_{i}} \right) \geq N(h_0-1) +\frac{r_0 N}{4}.
\end{eqnarray*}
Solving for $r_0$ gives 
$$4\left( \frac{\sigma -1+N}{N} \right) -4h_0 \geq r_0$$ 
and hence $(h_0,r_0)$ lies on or below the line $r=4\left( \frac{\sigma -1+N}{N} \right) -4h$.
\end{proof}

As a direct consequence of Proposition \ref{prop-tri}, we get the following further refinement of where the majority of skeletal signatures lie. 

\begin{corollary} For a fixed genus $\sigma \ge 2$, all points in $\mathcal{K}_\sigma$ lie on or below the line $r = \sigma + 2 - 3h$, with the exception of the point $(0, \sigma + 3)$ and the points $(h_0, 2\sigma+2-4h_0)$ (for $h_0\ge 0$) lying on the hyperelliptic line.
\label{gapcor}
\end{corollary}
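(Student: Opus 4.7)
The plan is to split the argument into cases based on the order of the acting group $G$, because the bound in Proposition \ref{prop-tri} is not sharp enough when $|G|$ is small: it is derived using $1 - 1/n_j \ge 1/2$, but when $|G|$ is $2$ or $3$ the orders $n_j$ are forced to take specific values (by divisibility of element orders), which both tightens Riemann-Hurwitz and forces the signature onto a particular line.

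First I would handle $|G|=2$: every branch order is $2$, and Riemann-Hurwitz collapses to $\sigma-1=2(h_0-1)+r_0/2$, giving $r_0=2\sigma+2-4h_0$, so the point lies on the hyperelliptic line and falls into the allowed exceptions. Next, for $|G|=3$ every $n_j$ equals $3$, hence $1-1/n_j=2/3$, and Riemann-Hurwitz gives $\sigma-1=3(h_0-1)+r_0$, i.e.\ $r_0=\sigma+2-3h_0$; these points lie exactly on the bounding line of the corollary, so they are permitted.

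The remaining case is $|G|\ge 4$. Here I would apply Proposition \ref{prop-tri} with $N=4$, which yields
\[ r_0 \le 4\Bigl(\frac{\sigma-1+4}{4}\Bigr)-4h_0 = \sigma+3-4h_0. \]
For $h_0\ge 1$ this gives $r_0\le \sigma+3-4h_0 \le \sigma+2-3h_0$ (since $1\le h_0$), placing $(h_0,r_0)$ on or below the target line. For $h_0=0$, the bound gives $r_0\le \sigma+3$. If $r_0=\sigma+3$ we obtain the listed exceptional point $(0,\sigma+3)$; otherwise $r_0\le \sigma+2=\sigma+2-3\cdot 0$, again on or below the line.

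The technical point I expect to be the main obstacle is recognising that Proposition \ref{prop-tri} alone is not enough and that one must exploit the rigidity of the branch orders when $|G|\in\{2,3\}$; once that observation is made, the three cases assemble cleanly and the two exceptional families emerge naturally as the outputs of the $|G|=2$ case and of the $h_0=0$, $|G|\ge 4$ subcase.
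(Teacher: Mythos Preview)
Your proposal is correct and follows essentially the same approach as the paper: split into the cases $|G|=2$, $|G|=3$, and $|G|\ge 4$, use the forced branch orders in the first two cases to land on the hyperelliptic line and the line $r=\sigma+2-3h$ respectively, and apply Proposition~\ref{prop-tri} with $N=4$ in the last case. Your explicit subcase $h_0\ge 1$ versus $h_0=0$ is just a slightly more detailed version of the paper's observation that $(0,\sigma+3)$ is the only integer point satisfying $\sigma+2-3h_0 < r_0 \le \sigma+3-4h_0$.
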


\begin{proof}  Let $(h_0, r_0)$ be a point of $\mathcal{K}_\sigma$.  As in the proof of Lemma \ref{triangular}, if $|G| = 2$, then all branch points have order 2, and by the Riemann-Hurwitz formula, $h_0$ and $r_0$ satisfy the equation $r = 2\sigma +2 -4h$, which is the equation of the hyperelliptic line.  If $|G|=3$, then all branch points have order 3, and again by the Riemann-Hurwitz formula, $h_0$ and $r_0$ satisfy the equation $r = \sigma + 2 - 3h$.  

Suppose now that $|G|\ge 4$.  By Proposition \ref{prop-tri}, any skeletal signature $(h_0, r_0)$ for a group $G$ with $|G|\geq 4$ lies on or below the line $r=\sigma +3-4h$. The only point on this line when $h_0\geq 0$ which lies above the line $r = \sigma + 2 - 3h$ is the point $(0,\sigma +3)$. The result follows.
\end{proof}

We can be a bit more ambitious.   Recall the {\em Hurwitz bound}, that the order of the automorphism group of a closed Riemann surface of genus $\sigma\ge 2$ is at most $84(\sigma -1)$.  

Fix a number $0 < c < 1$ and consider the asymptotic question of determining the location in $\mathcal{K}_\sigma$ of the skeletal signatures corresponding to groups of order at most $c\cdot 84(\sigma -1)$ as $\sigma\rightarrow\infty$.  Applying Proposition \ref{prop-tri} infinitely many times with the values $N = c\cdot 84(\sigma -1)$ as $\sigma\rightarrow\infty$, we see that such skeletal signatures lie in the part of $\mathcal{K}_\sigma$ below the line $r = 4 +\frac{1}{21c} - 4h$.  

The interesting observation is that this line is independent of the genus $\sigma$.   For instance, if we take $c = \frac{1}{7}$, then the skeletal signatures corresponding to groups of order at least $\frac{1}{7}\cdot 84 (\sigma -1) = 12 (\sigma -1)$ lie in the triangular region bounded by the axes $\{ h=0 \}$ and $\{ r=0\}$, and the line $\{ r = \frac{13}{3} - 4h\}$.   The only skeletal signatures that lie in this region and that can occur (see Section \ref{persistently missing} below) are $(0,3)$ and $(0,4)$.  It follows that any group of order at least $12 (\sigma -1)$ yields a quotient with genus $0$ and either $3$ or $4$ branch points.

Another interesting value of $c$ in this discussion is $c = \frac{1}{21}$.  By a similar argument, this is the smallest value of $c$ for which the resulting triangular region contains a skeletal signature $(h_0, r_0)$ with $h_0\ge 1$.  Namely, for $c = \frac{1}{21}$, we see that the triangular region is bounded by the axes $\{ h=0 \}$ and $\{ r=0\}$, and the line $\{ r = 5 - 4h\}$, and this triangular region contains the point $(1,1)$.  (See Theorem \ref{sporadic theorem 1} below.)  Hence, the smallest order of the automorphism group of a surface of genus $\sigma$ for which the resulting quotient surface has genus at least 1 is $\frac{1}{21} \cdot 84(\sigma -1) = 4(\sigma -1)$.

These observations provide a geometric counterpoint to the standard algebraic derivations of similar results; see for instance Lemma 3.18 of Breuer \cite{Breu}.  We feel that this geometric counterpoint, making use of skeletal signatures, provides a new and interesting way of visualizing what had been previously largely algebraic derivations.  

In the following subsections, we consider different flavors of points that do and do not lie in $\mathcal{K}_\sigma$.  Our discussion of these points contains a fair bit of conjecture, which we gather together in Section \ref{conjectural}.  Our investigations, and the conjectural picture we develop for $\mathcal{K}_\sigma$, make extensive use of the {\bf genus} package develop by Breuer for the computer algebra system GAP \cite{GAP}; see also Breuer \cite{Breu}.  This package contains the details of all group actions on all closed Riemann surfaces of genus $2\le \sigma\le 48$.  

\subsection{Persistent points}
\label{persistent}

The point $(h_0,r_0)\in T_\sigma$ is {\em persistent} if there is $\sigma_0\ge 2$ so that $(h_0,r_0)\in \mathcal{K}_\sigma$ for all $\sigma\ge \sigma_0$, so that $(h_0, r_0)$ is a skeletal signature for all $\sigma\ge \sigma_0$.   (Such points can be defined either with the coordinates $h_0$ and $r_0$ given as functions of $\sigma$ or with coordinates being constants independent of $\sigma$.)   If we wish to keep track of the specific value of $\sigma_0$ beyond which a persistent point $(h_0, r_0)$ is always in $\mathcal{K}_\sigma$, we say that $(h_0, r_0)$ is {\em persistent for all $\sigma\ge \sigma_0$}.

One class of persistent points for all $\sigma\ge 2$ are those skeletal signatures lying on the hyperelliptic line, introduced in Section \ref{properties}.  One specific example is the point $(0,2\sigma+ 2)$ arising from the signature $(0;[2, 2\sigma + 2])$, which is the signature resulting from the complex structure on $X$ admitting the hyperelliptic involution; similarly, we have the skeletal signatures $(h_0, 2\sigma+2-4h_0)$ (for $h_0 \ge 0$) arising from the signatures $(h_0; [2,2\sigma+2-4h_0])$ of the other points lying on the hyperelliptic line.

A second class of persistent points for all $\sigma\ge 2$ are those skeletal signatures lying on the line $\{ r = \sigma + 2 - 3h\}$ corresponding to the actions of $C_3$ on compact Riemann surfaces of genus $\sigma$. Specifically, for a fixed genus $\sigma$, the group $C_{3}$ acts with skeletal signature $(h_0, \sigma +2-3h_0)$ for $h_0\geq 0$.  (We do note here that for $\sigma$ of the form $\sigma = 3k-1$ for $k\in {\bf N}$, the point $(\frac{1}{3}(\sigma +1), 1)$ does not lie in $\mathcal{K}_\sigma$; this is an immediate consequence of the fact that a necessary condition for the existence of an abelian group action (such as $C_3$) is that $r_0\ne 1$, since in this case commutators will be trivial.)

A third example of a persistent point for all $\sigma \ge 2$ is $(0, \sigma+3)$, corresponding to the signature $(0;[2,\sigma + 3])$, which comes from the $C_2\times C_2$ action on $X$ generated by the hyperelliptic involution and rotation by $\pi$ in an axis through the middle of $X$ orthogonal to the axis corresponding to the hyperelliptic involution.

An example of a persistent point whose coordinates are independent of genus is the point $(0, 3)$, which arises from any branched cover of the Riemann sphere by $X$ that is branched over 3 points.  It is well know that for every $\sigma\ge 2$, we can find a compact Riemann surface $X$ of genus $\sigma$ for which such a covering exists; see for instance Example 9.7 of Breuer \cite{Breu} in which an explicit example of such a surface is given for each $\sigma$.  Such surfaces are commonly known as quasiplatonic surfaces and arise in the study of dessins d'enfants.

Some persistent points arise from straightforward geometric realizations of cyclic automorphisms.

\begin{lemma} The point $(2,0)\in \mathcal{K}_\sigma$ for all $\sigma\ge 3$.
\label{(2,0) persistent}
\end{lemma}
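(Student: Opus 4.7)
The plan is to show the point $(2,0)$ corresponds to a free (unramified) action of a cyclic group on a surface of genus $\sigma$ with quotient of genus $2$, and to verify its existence via Theorem \ref{thm-setup2}.

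First I would substitute $h=2$, $r=0$ into the Riemann-Hurwitz formula. Since the branching sum vanishes, this reduces to $\sigma - 1 = |G|(2-1) = |G|$, so the group realizing the skeletal signature $(2,0)$ must have order exactly $\sigma - 1$. Given $\sigma \geq 3$, we have $|G| = \sigma - 1 \geq 2$, so I would take $G = C_{\sigma-1}$, a natural candidate of the required order.

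By Theorem \ref{thm-setup2}, it then suffices to produce a $(2; -)$-generating vector for $C_{\sigma-1}$, that is, a 4-tuple $(a_1, b_1, a_2, b_2)$ of elements of $C_{\sigma-1}$ generating the group and satisfying $[a_1, b_1][a_2, b_2] = e_G$. (Condition (2) of the definition is vacuous since $r=0$.) Since $C_{\sigma-1}$ is abelian, every commutator is trivial, so condition (3) holds automatically for any choice of $a_i, b_i$. For condition (1), I would simply take $a_1$ to be a fixed generator of $C_{\sigma-1}$ and set $b_1 = a_2 = b_2 = e_G$; then $\langle a_1, b_1, a_2, b_2\rangle = \langle a_1\rangle = C_{\sigma-1}$.

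This furnishes a valid generating vector, so by Theorem \ref{thm-setup2} there is a compact Riemann surface $X$ of genus $\sigma$ on which $C_{\sigma-1}$ acts with signature $(2;-)$, whence $(2,0) \in \mathcal{K}_\sigma$. I do not expect any real obstacle here; the only mild subtlety is remembering that the definition of generating vector permits the hyperbolic generators $a_i, b_i$ to be trivial elements (only the branch-point generators $c_j$ are required to have prescribed nontrivial orders), which is what makes the abelian construction work.
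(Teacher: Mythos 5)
Your proof is correct, but it takes a different route from the paper. You verify the hypotheses of Theorem \ref{thm-setup2} directly: Riemann--Hurwitz with $h=2$, $r=0$ forces $|G|=\sigma-1$, and the abelian group $C_{\sigma-1}$ admits a $(2;-)$-generating vector $(a_1,e,e,e)$ with $a_1$ a generator, since the commutator condition is vacuous and the definition places no order constraints on the hyperbolic generators. The paper instead gives an explicit topological construction: it decomposes the torus into $\sigma-1$ parallel essential annuli permuted cyclically by a free rotation, attaches a handle to each annulus equivariantly, and obtains a genus $\sigma$ surface with a free $C_{\sigma-1}$ action whose quotient has genus $2$. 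Both arguments produce the same underlying action (a free cyclic action of order $\sigma-1$ with quotient of genus $2$); your version is shorter and leans entirely on the general existence theorem, which also handles the conformal structure cleanly, while the paper's version gives a concrete geometric picture consistent with the constructions it reuses later (e.g.\ in Proposition \ref{r=0 sporadic}). No gaps in your argument.
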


\begin{proof} View the torus $T$ as the union of $\sigma -1$ parallel essential annuli $A_1,\ldots, A_{\sigma -1}$, and note that this description of $T$ naturally gives rise to a fixed point free action of $C_{\sigma -1}$ on $T$ by a rotation taking $A_j$ to $A_{j+1}$ (where $A_\sigma = A_1$).   Attach a handle to each $A_j$ in such a way that respects this rotation.  This yields a surface $X$ of genus $\sigma$ on which $C_{\sigma -1}$ acts without fixed points with a quotient of genus 2.  
\end{proof}

\begin{lemma} The point $(1,2)\in \mathcal{K}_\sigma$ for all $\sigma\ge 2$. 
\label{(1,2) persistent}
\end{lemma}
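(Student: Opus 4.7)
My plan is to realize $(1,2)$ as the skeletal signature of a cyclic action. Specifically, for each $\sigma\ge 2$, I will show that the group $C_\sigma$ admits an action on some compact Riemann surface of genus $\sigma$ with signature $(1;\sigma,\sigma)$, and then invoke Theorem \ref{thm-setup2}.

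The first step is to verify the Riemann-Hurwitz formula. With $h=1$, $r=2$, $n_1=n_2=\sigma$, and $|G|=\sigma$, the right hand side is
\[
\sigma(1-1) + \frac{\sigma}{2}\left(\left(1-\tfrac{1}{\sigma}\right)+\left(1-\tfrac{1}{\sigma}\right)\right) = \sigma-1,
\]
as required.

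The second step, which is the substantive one, is to produce a $(1;\sigma,\sigma)$-generating vector $(a_1,b_1,c_1,c_2)$ for $C_\sigma$. Let $g$ be a generator of $C_\sigma$ and take $a_1=b_1=e_G$, $c_1=g$, $c_2=g^{-1}$. Then $c_1$ and $c_2$ both have order $\sigma$; since $C_\sigma$ is abelian, the commutator $[a_1,b_1]$ is trivial and the product relation reduces to $c_1c_2=e_G$, which holds; and $\langle c_1\rangle=\langle g\rangle=C_\sigma$, so the generation condition is immediate. Applying Theorem \ref{thm-setup2} then produces a compact Riemann surface $X$ of genus $\sigma$ admitting a $C_\sigma$-action with signature $(1;\sigma,\sigma)$, so $(1,2)\in \mathcal{K}_\sigma$.

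There is no real obstacle here: because $C_\sigma$ is abelian, the only nontrivial constraint coming from the long relation is $c_1c_2=e_G$, and this is trivially solved by inverse pairs. The geometric picture (entirely parallel to Lemma \ref{(2,0) persistent}) is a degree-$\sigma$ cyclic branched cover of a torus ramified over two points of full ramification order $\sigma$; the construction above is just the algebraic counterpart to that cover. The one thing to double-check is the boundary case $\sigma=2$, where the recipe still produces the valid vector $(e,e,g,g)$ with $g$ the non-identity element of $C_2$, so the argument is uniform in $\sigma\ge 2$.
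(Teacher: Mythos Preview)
Your proof is correct and essentially the same as the paper's: both realize $(1,2)$ via a $C_\sigma$-action with signature $(1;\sigma,\sigma)$. The paper presents this action geometrically (attaching a handle to each of $\sigma$ bigons on a sphere carrying the obvious $C_\sigma$ rotational symmetry through the poles), whereas you verify it algebraically via Theorem~\ref{thm-setup2}; as you yourself observe, these are two descriptions of the same branched cover.
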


\begin{proof} View the 2-sphere $S$ as the union of $\sigma$ parallel bigons $B_1,\ldots, B_\sigma$, where the vertices of each $B_j$ are the north and south poles of $S$, and note that this description of $S$ naturally gives rise to an action of $C_\sigma$ on $S$ by a rotation fixing the north and south poles and taking $B_j$ to $B_{j+1}$ (where $B_{\sigma +1} = B_1$).  Attach a handle to each $B_j$ in such a way that respects this rotation.  This yields a surface $X$ of genus $\sigma$ on which $C_\sigma$ acts with quotient a surface with signature $(1; [2,\sigma])$, and hence a skeletal signature of $(1,2)$.  
\end{proof}

\subsection{Persistently missing points}
\label{persistently missing}

The point $(h_0,r_0)\in T_\sigma$ is {\em persistently missing} if there exists $\sigma_0\ge 2$ so that $(h_0,r_0)\not\in \mathcal{K}_\sigma$ for all $\sigma\ge \sigma_0$, so that $(h_0, r_0)$ is a skeletal signature for no $\sigma\ge \sigma_0$.    (As with persistent points, such points can be defined either with the coordinates $h_0$ and $r_0$ given as functions of $\sigma$ or with coordinates being constants independent of $\sigma$.)    As with persistent points, if we wish to keep track of the specific value of $\sigma_0$ beyond which a persistently missing point $(h_0, r_0)$ is never in $\mathcal{K}_\sigma$, we say that $(h_0, r_0)$ is {\em persistently missing for all $\sigma\ge \sigma_0$}.

For examples of persistently missing points, we see that the points $(0,0)$, $(0,1)$, $(0,2)$, and $(1,0)$ are all persistently missing points for all $\sigma\ge 2$, for the obvious reason that the surfaces with these signatures are not hyperbolic surfaces and so cannot be covered by a compact Riemann surface of genus $\sigma\ge 2$, even as a branched cover.  

We note that Corollary \ref{gapcor} can be interpreted as saying that every point $(h_0, r_0)$ lying strictly between the lines $\{ r = 2\sigma+2-4h \}$ and $\{ r = \sigma + 2 - 3h\}$ is persistently missing for all $\sigma\ge 2$, except for the point $(0, \sigma + 3)$ which is persistent for all $\sigma\ge 2$.

\subsection{Sporadic points}
\label{sporadic}

The point $(h_0,r_0)\in T_\sigma$ is {\em sporadic} if there are infinitely many genera $\sigma$ for which $(h_0,r_0)\in \mathcal{K}_\sigma$ and infinitely many genera $\sigma$ for which $(h_0,r_0)\not\in \mathcal{K}_\sigma$. (For sporadic points, we make the same distinction between those sporadic points whose coordinates are functions of $\sigma$, and those whose coordinates are independent of $\sigma$.)   

We have a complete picture of what occurs on the $h$-axis.  Specifically, we know from the discussion in Section \ref{persistently missing} that $(1,0)$ never occurs, for geometric considerations, while we know from Lemma \ref{(2,0) persistent} that $(2,0)\in \mathcal{K}_\sigma$ for all $\sigma\ge 2$.   

\begin{proposition} For each $h_0\ge 3$, the point $(h_0, 0)\in \mathcal{K}_\sigma$ if and only if $\frac{\sigma -1}{h_0 -1}\in {\bf N}$.  In particular, the point $(h_0, 0)$ is sporadic.
\label{r=0 sporadic}
\end{proposition}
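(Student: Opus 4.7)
The plan is to read both directions directly off the Riemann-Hurwitz formula, using Theorem~\ref{thm-setup2} to produce the group action in the sufficient direction. The key observation is that $(h_0,0)$ corresponds to an \emph{unbranched} cover, in which case Riemann-Hurwitz collapses dramatically and the generating vector condition simplifies.

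For the forward direction, suppose $(h_0,0)\in\mathcal{K}_\sigma$, realized by some finite group $G$ acting on a compact Riemann surface of genus $\sigma$ with signature $(h_0;)$. With $r=0$ the Riemann-Hurwitz formula reduces to
$$\sigma-1 = |G|(h_0-1),$$
so $(\sigma-1)/(h_0-1)$ must be a positive integer.

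For the reverse direction, set $n:=(\sigma-1)/(h_0-1)\in{\bf N}$ and take $G=C_n$. By Theorem~\ref{thm-setup2} it suffices to exhibit an $(h_0;)$-generating vector for $C_n$, since Riemann-Hurwitz holds by the very choice of $n$. I would take $(a_1,b_1,\ldots,a_{h_0},b_{h_0})$ with $a_1$ a generator of $C_n$ and every other entry equal to the identity. Because $C_n$ is abelian, every commutator $[a_i,b_i]$ is trivial, so the product relation $\prod_{i=1}^{h_0}[a_i,b_i]=e_G$ is automatic, and the vector evidently generates $C_n$. The degenerate case $n=1$ (i.e.\ $\sigma=h_0$) is handled by the trivial group acting on any surface of genus $h_0$ with quotient the surface itself.

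To finish the sporadicity statement, note that since $h_0\ge 3$ we have $h_0-1\ge 2$, so the set $\{\sigma\ge 2 : (h_0-1)\mid(\sigma-1)\}=\{h_0,\,2h_0-1,\,3h_0-2,\ldots\}$ is an infinite arithmetic progression with infinite complement in $\{2,3,\ldots\}$, witnessing both halves of the sporadic definition. I do not anticipate any real obstacle: once one restricts to the unbranched case, the existence of a generating vector for cyclic $G$ is essentially trivial. The only bookkeeping subtlety is the convention at $n=1$ that the trivial group counts as an action, which is precisely what makes the divisibility condition not merely necessary but also sufficient.
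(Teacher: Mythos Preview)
Your argument is correct. The forward direction and the sporadicity check match the paper essentially verbatim. The difference lies in how you realize the action when $(h_0-1)\mid(\sigma-1)$: the paper gives a \emph{geometric} construction, viewing the torus as a union of $k=\frac{\sigma-1}{h_0-1}$ parallel essential annuli permuted cyclically by $C_k$, and then attaching a genus-$(h_0-1)$ piece to each annulus equivariantly to obtain a genus-$\sigma$ surface with a free $C_k$ action and quotient of genus $h_0$. You instead invoke Theorem~\ref{thm-setup2} directly and write down an $(h_0;)$-generating vector for $C_n$, which is arguably more economical since it uses the paper's own existence criterion and avoids any topological picture. The paper's route has the virtue of making the covering visible (and parallels the proofs of Lemmas~\ref{(2,0) persistent} and~\ref{(1,2) persistent}), while yours makes clear that the only obstruction in the unbranched case is the divisibility coming from Riemann--Hurwitz. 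Your handling of the degenerate case $n=1$ is a nice bit of care that the paper leaves implicit.
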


\begin{proof} For $r_0 = 0$, the Riemann-Hurwitz formula reduces to the equation $\sigma - 1 = |G| (h_0 - 1)$.  In particular, the quantity $\frac{\sigma -1}{h_0 - 1}$ must be an integer.  Since $h_0 \ge 3$, there are infinitely many $\sigma$ for which $(h_0, 0)$ does not lie in $\mathcal{K}_\sigma$.

Suppose now that $\sigma = k(h_0 -1) + 1$.   Consider the surface of genus $\sigma$ formed as follows.  (This is very similar to the construction given in the proof of Lemma \ref{(2,0) persistent}.)   View the torus $T$ as the union of $k = \frac{\sigma -1}{h_0 -1}$ parallel essential annuli $A_1,\ldots, A_k$ , and note that this description of $T$ naturally gives rise to a fixed point free action of $C_k$ on $T$ by a rotation taking $A_j$ to $A_{j+1}$ (where $A_{k+1} = A_1$).   Attach a surface of genus $h_0 - 1$ to each $A_j$ in such a way that respects this rotation.  This yields a surface $S$ of genus $1 + k (h_0 -1) = \sigma$ on which $C_k$ acts without fixed points with a quotient of genus $h_0$.  

Hence, we see that $(h_0, 0)\in \mathcal{K}_\sigma$ if and only if $\frac{\sigma -1}{h_0 -1}\in {\bf N}$.  
\end{proof}

\begin{theorem} The point $(1,1)$ is a sporadic point. 
\label{sporadic theorem 1}
\end{theorem}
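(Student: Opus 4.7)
The plan is to establish the two halves of sporadicity separately: there are infinitely many $\sigma$ for which $(1,1) \in \mathcal{K}_\sigma$, and infinitely many $\sigma$ for which $(1,1) \notin \mathcal{K}_\sigma$. By Theorem~\ref{thm-setup2}, $(1,1) \in \mathcal{K}_\sigma$ precisely when there is a finite group $G$ generated by a pair $(a,b)$ whose commutator $c = [b,a]$ has some order $n_1 \geq 2$, subject to the Riemann-Hurwitz relation $\sigma - 1 = |G|(n_1-1)/(2n_1)$.

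For the positive half, I produce an explicit family of dihedral actions. Taking $G$ to be the dihedral group of order $4m$ with presentation $\langle r, s \mid r^{2m}, s^2, srs = r^{-1}\rangle$ and $(a,b) = (r,s)$, a direct computation gives $[s, r] = r^{2}$, of order $m$. Then Riemann-Hurwitz yields $\sigma = 2m-1$, so $(1,1) \in \mathcal{K}_\sigma$ for every odd $\sigma \geq 3$.

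For the negative half, I will show that $(1,1) \notin \mathcal{K}_{p+1}$ for every prime $p \geq 5$. Any such action would satisfy $|G|(n_1 - 1) = 2p \cdot n_1$, and since $\gcd(n_1, n_1 - 1) = 1$ this forces $(n_1 - 1) \mid 2p$; with $p$ prime the only possibilities are $n_1 \in \{2, 3, p+1, 2p+1\}$, with corresponding orders $|G| \in \{4p, 3p, 2(p+1), 2p+1\}$. I rule out each case in turn. If $|G| = 2p + 1$, then $G = \langle c \rangle$ is cyclic, so $c$ is a commutator in an abelian group and hence trivial, contradicting $|c| \geq 2$. If $|G| \in \{3p, 4p\}$, the hypothesis $p \geq 5$ together with Sylow's theorem makes the Sylow $p$-subgroup $P \cong C_p$ normal, and $G/P$ is abelian of order $3$ or $4$, so $G' \leq P$ and every commutator has order $1$ or $p$, never $2$ or $3$. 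If $|G| = 2(p+1)$, and some $c \in G$ has order $p+1$, then $\langle c \rangle$ is normal of index $2$, so conjugation by any lift $b$ of the nontrivial coset yields $bcb^{-1} = c^\epsilon$ with $\epsilon^2 \equiv 1 \pmod{p+1}$; a direct calculation then shows that every commutator in $G$ lies in $\langle c^{\epsilon - 1} \rangle$, and the parity observation that $\epsilon$ must be odd (else $\epsilon^2$ is even, contradicting $\epsilon^2 \equiv 1 \pmod{2}$) together with $\epsilon \neq 1$ forces $\gcd(\epsilon - 1, p+1) \geq 2$, so $|\langle c^{\epsilon - 1} \rangle| \leq (p+1)/2 < p+1$ and no commutator can have the required order.

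The main obstacle is the final case $|G| = 2(p+1)$, which does not succumb to a Sylow-style argument and instead requires the parity-based analysis of the conjugation action on the cyclic normal subgroup $\langle c \rangle$. Combined with the Sylow reductions for the other cases, this exhausts all four alternatives and yields $(1,1) \notin \mathcal{K}_{p+1}$ for every prime $p \geq 5$, completing the proof of sporadicity.
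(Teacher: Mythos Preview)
Your proof is correct and structurally parallel to the paper's: the same choice of genera $\sigma=p+1$ for primes $p\ge 5$ in the negative half, the same four-way Riemann--Hurwitz case split, and a construction realising $(1,1)\in\mathcal{K}_\sigma$ for all odd $\sigma\ge 3$. The details differ in three places worth noting. For the positive half you use the dihedral group of order $4m$, whereas the paper uses the generalised quaternion group $G_m$; both yield $\sigma=2m-1$, and your choice is if anything the more familiar one (a harmless sign slip: $[s,r]=srs^{-1}r^{-1}=r^{-2}$, not $r^{2}$, but the order is still $m$). For the cases $|G|\in\{3p,4p\}$ your argument is more elementary and self-contained than the paper's: the paper invokes a technical result of Sah on signatures of subgroups, while you simply observe that the normal Sylow $p$-subgroup $P$ has abelian quotient $G/P$, forcing $G'\le P$ and hence every commutator to have order $1$ or $p$. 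For the case $|G|=2(p+1)$ the paper's argument is a touch slicker than yours: since $\langle c\rangle$ is cyclic, its index-$2$ subgroup $K$ is characteristic and hence normal in $G$, and $|G/K|=4$ immediately gives $G'\le K$, so no commutator has order $p+1$. Your explicit analysis of the conjugation exponent $\epsilon$ reaches the same conclusion with a bit more computation; you should also say explicitly that $\epsilon\equiv 1$ is harmless (it makes $G$ abelian, so all commutators are trivial), since as written the clause ``together with $\epsilon\ne 1$'' is asserted rather than justified.
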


\begin{proof} First we shall show that $(1,1)$ is not a skeletal signature for any genus $\sigma = p+1$ where $p\geq 5$ is prime. Suppose to the contrary that $\sigma =p+1$ for some prime $p\geq 5$, and suppose that a group $G$ acts on a compact Riemann surface $X$ of genus $\sigma$ with signature $(1;n)$; that is, suppose that $(1,1)$ is a skeletal signature for $\sigma = p+1$.  Applying the Riemann-Hurwitz formula, we see that
$$p=\frac{|G|(n-1)}{2n} \text{ or } 2np=|G|(n-1).$$ 
Since $n$ and $n-1$ are relatively prime, it follows that $n-1$ divides $2p$ and thus we are in one of the four cases $n=2$ and $|G|=4p$; or $n=3$ and $|G|=3p$; or $n=p+1$ and $|G|=2(p+1)$; or $n=2p+1$ and $|G|=2p+1$. We  consider these cases separately.

First suppose that $n=2$ and $|G|=4p$, so $G$ acts with signature $(1;2)$.  Since $p\geq 5$, the Sylow Theorems imply that $G$ has a unique normal subgroup $H$ of index $4$ and order $p$. Applying a technical result due to Sah \cite{Sah} which allows us to determine the signature for $H$ given its index in $G$, the signature of $G$ and the orders of the elements of $G$ in the quotient group $G/H$, we see that no such $H$ can exist and thus $(1;2)$ is not a skeletal signature. We can apply a very similar argument for the case when $n=3$ and $|G|=3p$

For the remaining two cases, we first note that if $G$ acts with signature $(1;n)$ for some $n$, then  there exists a $(1;n)$-generating vector for $G$, or equivalently, three elements $a_{1}$, $b_{1}$ and $c_{1}$ that generate $G$ where $c_{1}$ is a commutator of $G$ of order $n$ (since $a_{1}b_{1}a_{1}^{-1}b_{1}^{-1} c_{1} =e_G$). Since $n\geq 2$, it follows that $G$ cannot be Abelian. Note that this implies the case when $n=2(p+1)$ and $|G|=2(p+1)$ cannot occur since $G$ would be cyclic. 

The remaining case to consider is when $n=p+1$ and $|G|=2(p+1)$, so $G$ acts with signature $(1;p+1)$. Since $p+1$ appears in the signature for $G$, we know that $G$ must contain an element of order $p+1$, and so it follows that $G$ has an index $2$ cyclic subgroup $H$. Since $H$ is cyclic, every subgroup of $H$ is characteristic and hence normal in $G$. Since $p\geq 5$, $p+1$ is even, so $H$ contains a subgroup $K$ of index $2$. Since $G/K$ has order $4$, it is Abelian, so it follows that the commutator subgroup of $G$ must be contained in $K$. However, $|K|=(p+1)/2$, so there do not exist any commutators of order $p+1$, and hence $(1;p+1)$ is not a skeletal signature for $G$.

To finish the proof, we shall construct an infinite sequence of $\sigma$ for which $(1,1)$ is a skeletal signature. Let $G_{n}=\langle x,y|x^n=y^2,y^{-1}xy=x^{-1} \rangle$, $n\geq 2$, denote the generalized quaternion group . Then the vector $(x,y,yx^{-2}y^{-1})$ is a $(1;n)$-generating vector for $G_{n}$. Applying the Riemann-Hurwitz formula, it follows that $G_{n}$ acts on a surface of genus $\sigma =2n-1$. In particular, $(1,1)$ is skeletal signature for $\sigma =2n-1$ for any integer $n\geq 2$.
\end{proof}

Unfortunately, we do not yet have a characterization of the specific values of $\sigma$ for which $(1,1)$ is and is not a skeletal signature.  We note here that the latter part of the proof can easily be adapted to show that all skeletal signatures of the form $(h_0, 1)$ occur for infinitely many $\sigma$.

\begin{lemma} For any $h_0\ge 2$, the point $(h_0, 1)\in \mathcal{K}_\sigma$ for infinitely many $\sigma\ge 2$. 
\label{(h,1) almost sporadic}
\end{lemma}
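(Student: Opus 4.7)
The plan is to reuse, essentially verbatim, the generalized quaternion construction from the second half of the proof of Theorem \ref{sporadic theorem 1} and extend the length of its generating vector by padding with trivial pairs. For each $n\ge 2$, let $G_n=\langle x,y \mid x^n=y^2,\ y^{-1}xy=x^{-1}\rangle$, and let $(u,v,w)$ be the $(1;n)$-generating vector for $G_n$ exhibited in Theorem \ref{sporadic theorem 1}, so that $\langle u,v\rangle = G_n$, $w$ has order $n$, and $[u,v]\,w = e_{G_n}$.

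To produce an $(h_0;n)$-generating vector for $G_n$, I would consider the padded tuple
\[
(a_1,b_1,a_2,b_2,\dots,a_{h_0},b_{h_0},c_1) \;=\; (u,v,\,e_{G_n},e_{G_n},\,\dots,\,e_{G_n},e_{G_n},\,w),
\]
with $(a_i,b_i)=(e_{G_n},e_{G_n})$ for $2\le i\le h_0$. Each of the three defining conditions for a generating vector is then immediate: the subgroup $\langle a_1,b_1,\dots,a_{h_0},b_{h_0},c_1\rangle$ already contains $\langle u,v\rangle = G_n$; the order of $c_1=w$ is $n$; and $\prod_{i=1}^{h_0}[a_i,b_i]\cdot c_1 = [u,v]\cdot w = e_{G_n}$ since each added commutator is trivial. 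By Theorem \ref{thm-setup2}, $G_n$ therefore acts with signature $(h_0;n)$ on some compact Riemann surface, provided the Riemann-Hurwitz formula is satisfied for the appropriate genus.

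Applying Riemann-Hurwitz with $|G_n|=4n$ gives
\[
\sigma-1 \;=\; 4n(h_0-1) + \frac{4n}{2}\!\left(1-\tfrac{1}{n}\right) \;=\; 2n(2h_0-1)-2,
\]
so the action occurs in genus $\sigma = \sigma(n) = 2n(2h_0-1)-1$. Since $h_0\ge 2$, the map $n\mapsto \sigma(n)$ is strictly increasing, and $\{\sigma(n) : n\ge 2\}$ is an infinite set of genera each admitting a group action with skeletal signature $(h_0,1)$. Hence $(h_0,1)\in\mathcal{K}_\sigma$ for infinitely many $\sigma$, as required.

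I do not anticipate any genuine obstacle: the only subtlety is bookkeeping in the verification that padding with trivial pairs preserves the three generating-vector axioms, which is completely mechanical once the $(1;n)$-generating vector from Theorem \ref{sporadic theorem 1} is in hand.
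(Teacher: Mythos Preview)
Your proposal is correct and follows essentially the same approach as the paper: both pad the $(1;n)$-generating vector $(x,y,yx^{-2}y^{-1})$ for the generalized quaternion group $G_n$ by $2(h_0-1)$ copies of the identity to obtain an $(h_0;n)$-generating vector, and then read off the genus $\sigma = 4n(h_0-1)+2n-1 = 2n(2h_0-1)-1$ from Riemann--Hurwitz. Your write-up is in fact slightly more careful than the paper's, which contains the evident typo of calling the padded vector a ``$(1;n)$-generating vector'' rather than an $(h_0;n)$-generating vector.
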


\begin{proof}  Following the argument given at the end of Theorem \ref{sporadic theorem 1}, and using that notation, we shall construct an infinite sequence of $\sigma$ for which $(h_0,1)$ is skeletal signature.  The vector $(x,y,e_{G_n}, \ldots, e_{G_n}, yx^{-2}y^{-1})$ is a $(1;n)$-generating vector for $G_{n}$, where there are $2(h_0 -1)$ instances of $e_{G_n}$. Applying the Riemann-Hurwitz formula, it follows that $G_{n}$ acts on a surface of genus $\sigma =4n(h_0 -1) + 2n-1$. In particular, $(h_0,1)$ is skeletal signature for $\sigma =4n(h_0 -1) + 2n-1$ for any integer $n\geq 2$.
\end{proof}

\subsection{Conjectural picture}
\label{conjectural}

In this Section, we augment the results above with a fairly complete conjectural picture of $\mathcal{K}_\sigma$.  We start by considering the lines $\{ h_0 = a \}$ for small values of $a\in {\bf N}\cup \{ 0\}$. 

\begin{conjecture}  The points $(0,r_0)$ for $4\le r_0\le \sigma + 2$ are persistent points for all $\sigma \ge 2$. 
\label{h=0 gaps}
\end{conjecture}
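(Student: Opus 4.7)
The plan is constructive: for each $\sigma \ge 2$ and each $r_0$ with $4 \le r_0 \le \sigma + 2$, I would exhibit a finite group $G$ together with an $(0; m_1, \ldots, m_{r_0})$-generating vector for $G$ whose associated Riemann-Hurwitz formula yields genus $\sigma$; by Theorem \ref{thm-setup2} this places $(0, r_0)$ in $\mathcal{K}_\sigma$. The two extreme values of $r_0$ are essentially immediate. The upper endpoint $r_0 = \sigma + 2$ is realized by the $C_3$-action with signature $(0; [3, \sigma + 2])$ noted in Section \ref{persistent}. For the lower endpoint $r_0 = 4$, I would take $G = C_n$ with $n = \sigma + 1$ acting with signature $(0; [n, 4])$: the vector $(g, g, g^{-1}, g^{-1})$, with $g$ a generator, has trivial product, generates $C_n$, and satisfies Harvey's conditions; Riemann-Hurwitz then gives $\sigma = n - 1$, so $(0, 4) \in \mathcal{K}_\sigma$ for every $\sigma \ge 2$.

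For the intermediate range $4 < r_0 < \sigma + 2$, my plan is to assemble several parametrized families of cyclic and dihedral actions whose union covers every admissible $(\sigma, r_0)$. Natural candidate families include the dihedral group $D_n$ of order $2n$ acting with signature $(0; [2, r_0 - 2], n, n)$ when $r_0$ is even and $(0; [2, r_0 - 1], n)$ when $r_0$ is odd, exploiting the fact that a product of an even number of reflections lies in the rotation subgroup so that the product relation can be met by a suitable choice of reflections; cyclic actions $C_n$ with signature $(0; [n, a], [2, r_0 - a])$ for various admissible $a$, whose realizability is governed by Harvey's theorem; and elementary abelian actions $C_2^k$ with signature $(0; [2, r_0])$, which via Riemann-Hurwitz yield $\sigma = 1 + 2^{k-2}(r_0 - 4)$. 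For each such family, the Riemann-Hurwitz formula forces $\sigma$ to be an affine function of the family's parameter, so a single family typically fills out only one arithmetic progression of accessible genera.

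The principal obstacle, and apparently the reason this statement remains conjectural, is to verify that a finite union of such families covers every integer $\sigma \ge r_0 - 2$ without gaps. Each family typically supplies only one residue class of $\sigma$ modulo the relevant small integer, so one must introduce further constructions (likely involving semidirect products or other nonabelian groups of intermediate order) to fill in every remaining residue class, while simultaneously verifying Harvey's parity conditions and the generation requirement in each new case. Breuer's GAP package \cite{Breu}, which contains the complete data for all group actions on surfaces of genus $\sigma \le 48$, would be invaluable as a computational guide for identifying problematic residues of $\sigma$ and selecting targeted additional families to close the remaining gaps.
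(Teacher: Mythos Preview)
This statement appears in the paper as a \emph{conjecture}, not a theorem; the paper offers no proof of it, so there is nothing to compare your attempt against directly. What you have written is explicitly a plan rather than a proof, and you yourself identify the essential gap: each parametrized family you propose (dihedral, cyclic with mixed periods, elementary abelian) realizes, for fixed $r_0$, only an arithmetic progression of genera, and no finite collection of such families has been shown to cover every $\sigma \ge r_0 - 2$. That diagnosis is accurate and is precisely why the statement remains conjectural.

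Your endpoint constructions are correct. The signature $(0;[3,\sigma+2])$ for $C_3$ does give $(0,\sigma+2)$, and the signature $(0;[\sigma+1,4])$ for $C_{\sigma+1}$ with generating vector $(g,g,g^{-1},g^{-1})$ does give $(0,4)$; both satisfy Harvey's conditions. For the interior of the range, it is worth noting that the paper itself makes partial progress of a different flavour: the proof of Lemma~\ref{lem-main} uses $C_4$ actions, transformed by the operations $\mathcal{E}_{1,2}$ and $\mathcal{H}_1$, to produce the skeletal signatures $(0,\sigma+2-k)$ for all $0 \le k \le k_\sigma = \lfloor \sigma/3 \rfloor$. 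This covers roughly the top third of the conjectured interval $[4,\sigma+2]$ on the line $\{h=0\}$, uniformly in $\sigma$, without any residue-class restriction. Your dihedral and cyclic families attack the problem from the opposite end (small $r_0$, varying $n$), so the two approaches are complementary; but neither, nor their union, is currently known to close the gap in the middle of the range, and your acknowledgement that further nonabelian constructions would be needed is the honest state of affairs.
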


We have seen in Section \ref{persistent} that $(0, 2\sigma+2)$, $(0, \sigma+3)$, and $(0,3)$ are persistent points for all $\sigma \ge 2$, while Corollary \ref{gapcor} yields that no point strictly between $(0, 2\sigma+2)$ and $(0, \sigma+3)$ can be a skeletal signature.  Hence, combined with these results, Conjecture \ref{h=0 gaps} completes the description of all skeletal signatures of the form $(0, r_0)$.

A similar phenomenon occurs on the line $\{ h_0 = 1\}$.  

\begin{conjecture} The points $(1,r_0)$ for $3\le r_0\le \sigma-1$ are persistent points for all $\sigma\ge 2$.
\label{h=1 gap}
\end{conjecture}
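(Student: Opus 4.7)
The plan is to prove Conjecture 4.8 through explicit construction, following the spirit of Lemma 3.15 and Theorem 3.12. By Theorem 2.3 it suffices to produce, for each fixed $r_0\ge 3$ and each sufficiently large $\sigma$, a finite group $G$ and a $(1;n_1,\ldots,n_{r_0})$-generating vector for $G$ satisfying the Riemann--Hurwitz relation $\sigma - 1 = \tfrac{|G|}{2}\sum_{i}(1 - 1/n_i)$. I would concentrate first on cyclic actions $G = C_m$, where commutators vanish, generation can be arranged by taking $a$ to be a generator of $C_m$, and the only remaining content is the existence of $c_i \in C_m$ of order $n_i$ with $\sum c_i \equiv 0 \pmod m$.

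The leading family to analyze is $C_{3n}$ acting with signature $(1;[3,r_0])$: in $C_{3n}$ the order-$3$ elements are exactly $\pm n$, and for any $r_0 \ge 3$ one may partition $r_0 = a+b$ with $a - b \equiv 0 \pmod 3$ to produce a valid generating vector, giving $\sigma = 1 + nr_0$ via Riemann--Hurwitz. This realizes $(1, r_0) \in \mathcal{K}_\sigma$ for every $\sigma \equiv 1 \pmod{r_0}$; the edge case $\sigma = r_0 + 1$ is the $n = 1$ subcase in $C_3$. To reach the remaining residue classes of $\sigma$ modulo $r_0$, I would layer in further families, such as $C_m$ with signature $(1;[2,r_0])$ when $r_0$ is even (forcing $r_0 \mid 4(\sigma-1)$), and asymmetric signatures $(1;[2,s],[n,r_0-s])$ for varying $s$ and $n$; each such family contributes another arithmetic progression in $\sigma$, and the per-family realizability check reduces to a short linear Diophantine condition together with the divisibilities $n_i \mid m$.

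The main obstacle is a residue-covering argument that is uniform in $r_0$: one has to verify that a finite collection of these progressions jointly covers every sufficiently large $\sigma$ and to control the threshold $\sigma_0(r_0)$ explicitly. I expect the cyclic families to leave arithmetic gaps, particularly for small $r_0$ where the limited flexibility in the branch orders is most acute; to fill such gaps one would appeal to non-abelian constructions such as the generalized quaternion groups $G_n$ employed in Theorem 3.12 and Lemma 3.14, whose central order-$2$ elements and order-$n$ commutators support additional signatures through the insertion of padding entries in the generating vector, along with dihedral or other small non-abelian groups for the handful of $(r_0,\sigma)$ pairs that remain. Any residual small-genus cases below $\sigma_0(r_0)$ could then be verified directly against Breuer's \textbf{genus} package in GAP.
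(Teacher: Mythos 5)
The statement you are trying to prove is stated in the paper only as a conjecture (Conjecture \ref{h=1 gap}); the authors offer no proof, only computational evidence from Breuer's \textbf{genus} package for $2\le\sigma\le 48$. So there is no argument in the paper to compare yours against, and your submission must be judged as a standalone proof. As such it is a programme rather than a proof. The one family you actually verify is sound: for $G=C_{3n}$ with signature $(1;[3,r_0])$, the order-$3$ elements are $\pm n$, the condition $a-b\equiv 0\pmod 3$ with $a+b=r_0$ is solvable for every $r_0\ge 3$, commutators vanish, and Riemann--Hurwitz gives $\sigma=1+nr_0$. This correctly places $(1,r_0)$ in $\mathcal{K}_\sigma$ for $\sigma\equiv 1\pmod{r_0}$, which is one residue class out of $r_0$. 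Everything after that is deferred: the ``residue-covering argument uniform in $r_0$'' that you name as the main obstacle is precisely the content of the conjecture, and you do not supply it. The auxiliary families you list each carry their own divisibility constraints (e.g.\ $(1;[2,r_0])$ forces $r_0$ even and $r_0\mid 4(\sigma-1)$), and for $r_0$ a large prime most of them degenerate, so it is not at all clear that finitely many cyclic families cover all residues modulo $r_0$; the appeal to generalized quaternion, dihedral, or ``other small non-abelian groups'' for whatever remains is a placeholder, not an argument.

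There is also a structural problem with your fallback. The conjecture asserts persistence with the sharp threshold $\sigma\ge r_0+1$ (the constraint $3\le r_0\le\sigma-1$ is exactly $\sigma\ge r_0+1$), simultaneously for every $r_0\ge 3$. If your construction only works for $\sigma\ge\sigma_0(r_0)$ with $\sigma_0(r_0)$ uncontrolled, the exceptional set $\{(r_0,\sigma): r_0+1\le\sigma<\sigma_0(r_0)\}$ is infinite as $r_0$ ranges over all integers $\ge 3$, and it cannot be discharged by consulting Breuer's tables, which stop at genus $48$. To turn your plan into a proof you would need either a single construction valid for all $\sigma\ge r_0+1$ (for instance, a uniform recipe producing, for arbitrary $\sigma-1=\tfrac{|G|}{2}\sum(1-1/n_i)$ with exactly $r_0$ periods, a cyclic or metacyclic generating vector --- the full strength of Harvey's theorem \cite{Har1}, of which Theorem \ref{harvey theorem} is the $C_4$ special case, would be the natural tool), or an explicit bound on $\sigma_0(r_0)$ together with a separate treatment of the finitely many cases below it for each $r_0$, carried out in a way that is uniform in $r_0$. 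Neither is present, so the proposal has a genuine gap at its central step.
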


Theorem \ref{sporadic theorem 1} largely describes the behavior of the point $(1,1)$, while Lemma \ref{(1,2) persistent} yields that $(1,2)$ is persistent for all $\sigma\ge 2$.  The discussion above and Corollary \ref{gapcor} show that that no point strictly between $(1, 2\sigma-2)$ and $(1, \sigma-1)$ can be a skeletal signature.  Hence, combined with these results, Conjecture \ref{h=1 gap} completes the description of all skeletal signatures of the form $(1, r_0)$. 

On the line $\{ h_0 = 2\}$, and indeed on $\{ h_0 = a\}$ for $a\ge 3$, the situation becomes more complicated.  Namely, we see experimentally that there are some persistent gaps, whose coordinates are dependent on $\sigma$, that occur in these lines.  Also, the behavior of a point $(h_0, r_0)$ for small $r_0$ becomes ragged.  We begin with the following conjecture.

\begin{conjecture} For $\sigma\ge 9$, let $E_\sigma$ be the line with slope $-3$ passing through $(1, \sigma -1)$ and let $D_\sigma$ be the line with slope $-4$ passing through $(1, \sigma -1)$.  Then no point strictly between $E_\sigma$ and $D_\sigma$ lies in $\mathcal{K}_\sigma$.
\label{triangular gap}
\end{conjecture}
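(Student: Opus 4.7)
The plan is to prove the conjecture outright and directly from Proposition \ref{prop-tri}. The idea is to partition a putative skeletal signature $(h_0, r_0)$ strictly between $E_\sigma$ and $D_\sigma$ by the order $|G|$ of the acting group and check that no value of $|G|$ can place the point in the open strip.

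First I would pin down the geometry. Since $E_\sigma$ and $D_\sigma$ meet at $(1,\sigma-1)$ and $D_\sigma$ has the steeper slope, a lattice point is strictly between them precisely when $h_0 \geq 2$ and
$$\sigma + 3 - 4h_0 \ < \ r_0 \ < \ \sigma + 2 - 3h_0;$$
at $h_0 = 0$ the open interval $(\sigma+2,\sigma+3)$ contains no integer, and at $h_0 = 1$ the two lines coincide.

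The main case is $|G| \geq 4$: Proposition \ref{prop-tri} with $N = 4$ gives
$$r_0 \ \leq \ 4\!\left(\frac{\sigma - 1 + 4}{4}\right) - 4h_0 \ = \ \sigma + 3 - 4h_0,$$
so the point lies on or below $D_\sigma$, directly contradicting the lower bound in the strip. The remaining cases $|G| \in \{2,3\}$ are dispatched by primeness: every branch order $n_j \geq 2$ must divide and hence equal $|G|$, so Riemann--Hurwitz collapses to a single linear equation. For $|G| = 2$ this is the hyperelliptic line $r = 2\sigma + 2 - 4h$, which lies strictly above $E_\sigma$ throughout $T_\sigma$, since $h_0 \leq (\sigma+1)/2 < \sigma$ (from the computation in the proof of Corollary \ref{quadratic (h,r) upper bound}) forces $2\sigma + 2 - 4h_0 > \sigma + 2 - 3h_0$. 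For $|G| = 3$ the equation is exactly $r = \sigma + 2 - 3h$, i.e.\ $E_\sigma$ itself. In neither sub-case can $(h_0, r_0)$ lie strictly between $E_\sigma$ and $D_\sigma$.

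I do not anticipate a serious obstacle here: once the geometric setup is in place, the argument is a three-line case split, with Proposition \ref{prop-tri} doing the essential work and the primeness of $2$ and $3$ trivializing the small-group cases. In fact the same reasoning establishes the statement for every $\sigma \geq 2$; the authors' hypothesis $\sigma \geq 9$ appears to serve only to guarantee that the open strip between $E_\sigma$ and $D_\sigma$ actually contains an integer lattice point (the first such $\sigma$ is in fact $8$, via the point $(3,0)$, which is likewise excluded by Proposition \ref{r=0 sporadic}).
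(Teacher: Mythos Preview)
Your argument is correct, and in fact you have \emph{proved} what the paper only states as a conjecture: the paper offers no proof of Conjecture~\ref{triangular gap} at all. The three-way case split on $|G|$ is exactly right, and each case lands where you say it does.

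What is worth observing is that your argument is not merely inspired by the paper's methods --- it is literally the content of the proof of Corollary~\ref{gapcor}. That proof already establishes the three facts you need: $|G|=2$ forces the hyperelliptic line $r=2\sigma+2-4h$; $|G|=3$ forces the line $r=\sigma+2-3h$ (your $E_\sigma$); and $|G|\ge 4$ forces $r\le \sigma+3-4h$ (on or below your $D_\sigma$) via Proposition~\ref{prop-tri}. The conjecture is then immediate, since for $h_0\ge 2$ the open strip lies strictly below $E_\sigma$ and strictly above $D_\sigma$, while the hyperelliptic line sits above $E_\sigma$ throughout $T_\sigma$. So the paper's own Corollary~\ref{gapcor} already contains a proof of Conjecture~\ref{triangular gap}; the authors appear simply not to have noticed this.

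Your remark on the hypothesis $\sigma\ge 9$ is also well taken: the argument works for every $\sigma\ge 2$, and your identification of $(3,0)$ as a lattice point in the open strip at $\sigma=8$ shows that the paper's side comment (that the strip is empty for $\sigma\le 8$) is slightly off.
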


Note that for $\sigma\le 8$, the set of points strictly between $E_\sigma$ and $D_\sigma$ is empty.  Corollary \ref{gapcor} and Conjecture \ref{triangular gap} describe an interesting phenomenon, namely that there are large parts of the triangular region $T_\sigma$ of potential skeletal signatures that in fact do not occur as skeletal signatures for any genus.  As skeletal signatures contain only the information about the number of branch points but not their specific orders, these gaps eliminate many potential signatures.  

We now turn our attention to the lines $\{ h_0 =2\}$ and $\{ h_0 = 3\}$.  For larger values of $a$, we get similar conjectural pictures, but unfortunately, we do not have enough evidence to formulate specific conjectures. 

\begin{conjecture} The point $(2,1)$ is sporadic.  The point $(2, (\frac{2}{3}\sigma -4))$ is persistently missing for all $\sigma\ge 7$.  All points $(2,r_0)$ for $2\le r_0 < (\frac{2}{3}\sigma -4)$ and $(\frac{2}{3}\sigma -4) < r_0 \le \sigma -4$ are persistent for all $\sigma\ge 7$, with the single exception that $(2,2)$ is not a skeletal signature for $\sigma=17$. 
\label{h=2}
\end{conjecture}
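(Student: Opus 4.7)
The plan is to address the three parts of Conjecture \ref{h=2} separately: the sporadicity of $(2,1)$, the persistent absence of the gap point $(2,(\frac{2}{3}\sigma-4))$, and the persistence of the remaining admissible values. The first two can be proved along lines parallel to Theorem \ref{sporadic theorem 1} and Proposition \ref{prop-tri}; the main obstacle is the third, which requires constructing group actions realizing every admissible $r_0$ uniformly in $\sigma$.

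\textbf{Sporadicity of $(2,1)$.} For realization on infinitely many $\sigma$, I would mimic Lemma \ref{(h,1) almost sporadic} with $h_0=2$: inserting one pair of trivial elements into the generating vector for the generalized quaternion group $G_n=\langle x,y\mid x^n=y^2,\ y^{-1}xy=x^{-1}\rangle$ from Theorem \ref{sporadic theorem 1} produces a $(2;n)$-generating vector, and Riemann-Hurwitz then yields $\sigma=6n-1$ for each $n\geq 2$. For non-realization on infinitely many $\sigma$, I would take $\sigma=p+1$ with $p\geq 5$ prime. Riemann-Hurwitz with $h_0=2,\ r_0=1$ reduces to $2np=|G|(3n-1)$, and since $\gcd(n,3n-1)=1$, the primality of $p$ leaves only two cases: $n=(2p+1)/3,\ |G|=n$ (requiring $p\equiv 1\pmod 3$), or $n=(p+1)/3,\ |G|=2n$ (requiring $p\equiv 2\pmod 3$, which forces $n$ even here). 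In the first case $G$ must be cyclic, whose commutator subgroup is trivial. In the second case $G$ is one of the groups of order $2n$ containing an element of order $n$, and in each instance (abelian, dihedral, or dicyclic) the commutator subgroup has order at most $n/2$. Since a $(2;n)$-generating vector requires $c_1$ of order $n$ to be a product of two commutators, all cases fail.

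\textbf{The gap point.} For $(2,(\frac{2}{3}\sigma-4))$ to be persistently missing, I would apply Proposition \ref{prop-tri}: the bound $r_0\leq 4(\sigma-1)/|G|-4$ together with $r_0\geq \frac{2}{3}\sigma-\frac{9}{2}$ (accounting for rounding) forces $|G|\leq 5$ whenever $\sigma\geq 7$. For $|G|\in\{2,3,5\}$ the group structure fixes the branch orders, giving respectively the unique values $r_0=2\sigma-6$, $\sigma-4$, $(\sigma-6)/2$, none of which equals $(\frac{2}{3}\sigma-4)$ for $\sigma\geq 7$. For $|G|=4$, either $G=C_2\times C_2$ (with $r_0=\sigma-5$) or $G=C_4$ with signature $(2;[2,s],[4,t])$, for which Riemann-Hurwitz gives $2s+3t=2\sigma-10$; subtracting $3(s+t)=3r_0$ yields $s=3r_0-2\sigma+10$, which for any integer $r_0$ within $\pm\tfrac{1}{2}$ of $\frac{2}{3}\sigma-4$ is strictly negative, a contradiction.

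\textbf{The remaining persistent points.} This is where the main work lies. For each admissible $r_0\in\{2,\ldots,\sigma-4\}\setminus\{(\frac{2}{3}\sigma-4)\}$, one must exhibit an explicit action realizing a signature of the form $(2;n_1,\ldots,n_{r_0})$. The extreme values $r_0=2\sigma-6$ (hyperelliptic) and $r_0=\sigma-4$ ($C_3$-action) are already known to be persistent. For the intermediate values I would construct parametrized families using cyclic groups $C_n$ with mixed signatures $(2;[n_1,t_1],[n_2,t_2])$ for divisors $n_1,n_2\mid n$; varying $n$ and the $t_i$ within a congruence class of $\sigma$ modulo a small modulus realizes most remaining values, and a few small non-abelian groups handle the harder residues. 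The isolated exception $(\sigma,r_0)=(17,2)$ can be verified directly using Breuer's \texttt{genus} package \cite{Breu}. The main obstacle is the bookkeeping: devising enough flexible families so that their union hits every non-gap lattice point without accidentally realizing the gap. This delicate combinatorial task is precisely what keeps the full statement at the level of conjecture.
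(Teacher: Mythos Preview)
The statement you are addressing is Conjecture~\ref{h=2}; the paper offers no proof, only computational evidence from Breuer's tables. There is therefore nothing in the paper to compare your argument against. Your proposal goes well beyond what the paper claims to establish, and you yourself recognise at the end that the third clause is precisely what keeps the statement conjectural.

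That said, your partial arguments deserve comment. Your treatment of the sporadicity of $(2,1)$ is sound in outline, but the case $|G|=2n$ is not handled cleanly: the list ``abelian, dihedral, or dicyclic'' is not an exhaustive classification of groups of order $2n$ containing an element of order $n$ (for $n$ with several prime factors there are further split extensions). The conclusion nevertheless survives, because in \emph{any} group $G$ with a cyclic subgroup $\langle x\rangle$ of index $2$ and $n$ even, the conjugation action forces $yxy^{-1}=x^{r}$ with $r$ odd and $r^{2}\equiv 1\pmod n$, whence $G'=\langle x^{r-1}\rangle$ has order $n/\gcd(n,r-1)\le n/2$; you should say this rather than appeal to an incomplete list.

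Your gap-point argument is essentially correct but skips a small check. After bounding $|G|\le 5$, you must still exclude the coincidence $(\sigma-6)/2=\bigl(\tfrac{2}{3}\sigma-4\bigr)$ for $|G|=5$; this happens only at $\sigma=8$, where the putative $C_5$-signature would be $(2;5)$, and that is ruled out because $C_5$ is abelian (so $r_0=1$ is impossible). For $\sigma\ge 10$ one has $(\sigma-6)/2<\tfrac{2}{3}\sigma-\tfrac{9}{2}$, and the rounding causes no trouble. With these two refinements your arguments for the first two clauses would constitute genuine progress on the conjecture, though the persistence clause remains, as you say, the substantive open part.
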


Together with the discussion above, Lemma \ref{(2,0) persistent}, Corollary \ref{gapcor} and Conjecture \ref{triangular gap}, Conjecture \ref{h=2} completes the description of all skeletal signatures of the form $(2, r_0)$.

\begin{conjecture} The point $(3,1)$ is sporadic.  The points $(3,(\frac{2}{3}\sigma - 7))$ and $(3,(\frac{2}{3}\sigma - 8))$ are persistently missing for all $\sigma\ge 18$.   For $\sigma\equiv 2 \mbox{(mod }3)$, the point $(3,(\frac{2}{3}\sigma - 6))$ is persistently missing for all $\sigma\ge 18$.  All remaining points $(3, r_0)$ with $2\le r_0\le \sigma -9$ are persistent for all $\sigma\ge \sigma_0$ for some $\sigma_0$.  
\label{h=3}
\end{conjecture}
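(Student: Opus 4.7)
The plan is to split Conjecture \ref{h=3} into its four separate claims and attack each with tools developed earlier. Throughout, Proposition \ref{prop-tri} is the key preliminary: for any $r_0$ on the order of $\tfrac{2}{3}\sigma$ at $h_0 = 3$, a short calculation with the line $r = \tfrac{4(\sigma -1+N)}{N} - 4h$ gives $|G| \le 6$ for all sufficiently large $\sigma$, reducing the existence question to the finite list $\{C_2, C_3, V_4, C_4, C_5, C_6, S_3\}$ together with the element orders available in each.

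For the sporadicity of $(3, 1)$, the positive direction follows the template of Lemma \ref{(h,1) almost sporadic}: padding the $(1; n)$-generating vector $(x, y, yx^{-2}y^{-1})$ of the generalized quaternion group $G_n$ with four identity entries produces a $(3; n)$-generating vector, and Riemann--Hurwitz places the resulting action on a surface of genus $\sigma = 10n - 1$, so $(3, 1) \in \mathcal{K}_{10n - 1}$ for every $n \ge 2$. For the negative direction I would select an infinite family of $\sigma$ (indexed by a prime $p$, e.g.\ $\sigma - 1$ divisible by $5p$) for which the reduced Riemann--Hurwitz relation $2n(\sigma - 1) = |G|(5n - 1)$ admits only a short list of $(|G|, n)$-pairs, and in each pair invoke the Sylow theorems, Sah's restriction formula \cite{Sah}, and the constraint that $c_1$ is a single commutator (hence cannot have large order when $G$ is abelian or has small commutator subgroup) to rule out a generating vector, exactly in the spirit of Theorem \ref{sporadic theorem 1}.

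For parts (b) and (c), I would work residue class by residue class of $\sigma$ modulo $3$ to make the rounding in $(\tfrac{2}{3}\sigma - c)$ explicit. For each candidate group $G$ of order at most $6$, I would solve the integer system
\[
\sigma - 1 = 2|G| + \tfrac{|G|}{2} \sum_j \Bigl(1 - \tfrac{1}{n_j}\Bigr), \qquad r_0 = \sum_j t_j,
\]
for the branch-order multiplicities $(t_2, t_3, t_4, t_5, t_6)$. Elementary elimination shows that for the two points in (b) every candidate configuration forces some $t_j$ to be negative or non-integer for $\sigma \ge 18$, and for the point in (c) the $C_4$ signature $(3; [2, t_2], [4, t_4])$ with $t_4 = 2(\sigma - 9 - r_0)$ and $t_2 = 3r_0 - 2\sigma + 18$ is the only potential survivor; a direct calculation gives $t_2 \ge 0$ when $\sigma \equiv 0, 1 \pmod 3$ but forces $t_2 = -1$ precisely when $\sigma \equiv 2 \pmod 3$, which is exactly the exceptional case of the conjecture.

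For the persistence claim (d), I would construct an explicit $(3; n_1, \ldots, n_{r_0})$-generating vector for each remaining $(3, r_0)$. The band near $r_0 = \sigma - 9$ is filled by $V_4$ or $C_4$ signatures of the form $(3; [2, t_2], [4, t_4])$; the signature $(3; [3, \sigma - 7])$ takes care of the $C_3$ line; and for smaller $r_0$, I would adapt the constructions of Lemmas \ref{(2,0) persistent} and \ref{(1,2) persistent}, viewing a torus or sphere equipped with a $C_n$-action, attaching genus-$2$ surfaces along the orbits to lift the quotient genus to $3$, and inserting branch points via local equivariant modifications. The main obstacle here is organizing these constructions into a coherent family indexed by $(\sigma, r_0)$ that uniformly covers every residue class of $\sigma$ modulo small integers (arising from parity and mod-$3$ constraints in the Riemann--Hurwitz systems for $C_4$, $C_6$, and their extensions) while cleanly avoiding the three persistently missing points identified in (b) and (c).
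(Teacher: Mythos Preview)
The statement you are attempting to prove is labelled a \emph{Conjecture} in the paper, and the paper offers no proof of it; the surrounding discussion in Section~\ref{conjectural} makes clear that it is formulated on the basis of computational evidence from Breuer's data for $2 \le \sigma \le 48$ rather than on a demonstration. There is therefore no proof in the paper to compare your proposal against.

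Your proposal is a reasonable plan of attack, and several pieces of it are sound: the positive half of part (a) is exactly Lemma~\ref{(h,1) almost sporadic}; the bound $|G| \le 6$ from Proposition~\ref{prop-tri} for parts (b) and (c) is correct; and your $C_4$ computation giving $t_2 = -1$ precisely when $\sigma \equiv 2 \pmod 3$ checks out. But the proposal is not a proof. The negative direction of (a) is only a template (``I would select an infinite family \ldots''), and the Riemann--Hurwitz relation $2n(\sigma-1) = |G|(5n-1)$ with the extra genus-$3$ hyperbolic/elliptic pairs in the generating vector gives far more flexibility than the genus-$1$ case of Theorem~\ref{sporadic theorem 1}, so the case analysis does not obviously close. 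Parts (b) and (c) require completing the elimination over \emph{all} groups of order at most $6$ (including $C_6$ and $S_3$, where mixed branch orders complicate the integer system), not just exhibiting the $C_4$ obstruction.

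Most seriously, the persistence claim (d) for small $r_0$ runs into an obstacle the authors themselves flag: immediately after stating the conjecture they write that ``the cases of $(3,2)$ and $(3,3)$ are more problematic'' and that ``we must recognize the possibility that one or the other, or both, are in fact sporadic.'' Your construction for (d) --- attaching genus-$2$ pieces along orbits of a $C_n$-action on a torus or sphere --- does not obviously produce actions with skeletal signature $(3,2)$ or $(3,3)$ for all large $\sigma$, and the authors' caveat indicates that no such uniform construction was known to them. Until that gap is closed, the statement remains open.
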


Together with the discussion above, Lemma \ref{r=0 sporadic}, Corollary \ref{gapcor} and Conjecture \ref{triangular gap}, Conjecture \ref{h=3} completes the complete description of all skeletal signatures of the form $(3, r_0)$.  However, while we have a high level of confidence in this conjecture for the points $(3, r_0)$ for $r_0\ge 4$, the cases of $(3,2)$ and $(3,3)$ are more problematic.  While we feel that the evidence is suggestive for the behavior of these two skeletal signatures as $\sigma\rightarrow\infty$, we must recognize the possibility that one or the other, or both, are in fact sporadic.

Based on our analysis of the evidence to hand, including what we have been able to prove in previous Sections, we feel confident in making the following two strong conjectures, which when combined with the results from previous Sections provide a complete description of the behavior of any specific point $(h_0, r_0)$. 

\begin{conjecture} For any $h_0\ge 2$, the point $(h_0, 1)$ is sporadic.
\label{r=1 conjecture}
\end{conjecture}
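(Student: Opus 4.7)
\emph{Plan.} By Lemma~\ref{(h,1) almost sporadic}, $(h_0,1)$ already lies in $\mathcal{K}_\sigma$ for infinitely many $\sigma$, so the task is to produce infinitely many $\sigma$ for which $(h_0,1)\notin\mathcal{K}_\sigma$. The plan is to restrict attention to $\sigma=p+1$ with $p$ a prime in a carefully chosen residue class, combining a Riemann--Hurwitz arithmetic reduction with the observation that the single branch element $c$ in any $(h_0;n)$-generating vector satisfies $c=\bigl(\prod_{i=1}^{h_0}[a_i,b_i]\bigr)^{-1}\in [G,G]$, so that $G$ must be non-abelian whenever $n\ge 2$.

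\emph{Arithmetic reduction.} Suppose $G$ acts on a surface of genus $\sigma=p+1$ with signature $(h_0;n)$, and set $d=n(2h_0-1)-1$. Riemann--Hurwitz rearranges to $|G|\,d=2np$, and since $\gcd(n,d)=\gcd(n,-1)=1$ and $d\ge 4h_0-3\ge 5$, the divisibility $d\mid 2p$ restricts $d$ to $\{p,2p\}$. The case $d=p$ gives $p\equiv -1\pmod{2h_0-1}$ with $|G|=2n$; the case $d=2p$ gives $p\equiv h_0-1\pmod{2h_0-1}$ (using that $2$ is invertible modulo the odd number $2h_0-1$) with $|G|=n$.

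\emph{Eliminating $d=2p$ and concluding.} In the case $d=2p$ one has $|G|=n$ while $c\in G$ has order $n$; thus $\langle c\rangle=G$ and $G$ is cyclic, so $[G,G]$ is trivial, contradicting $c\in[G,G]$ with $n\ge 2$. Hence $(h_0,1)\in\mathcal{K}_{p+1}$ forces $p\equiv -1\pmod{2h_0-1}$. Applying Dirichlet's theorem to the progression $p\equiv 1\pmod{2h_0-1}$ (valid since $\gcd(1,2h_0-1)=1$) produces infinitely many such primes, and for $h_0\ge 2$ we have $2h_0-1\ge 3$, so $1\not\equiv -1\pmod{2h_0-1}$. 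No such $p$ admits an action realizing the skeletal signature $(h_0,1)$, which combined with Lemma~\ref{(h,1) almost sporadic} yields sporadicity.

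\emph{Where I expect the subtlety.} The arithmetic trick sidesteps the case $d=p$ entirely, which is the harder case, since a head-on attack there would demand a structural classification of groups of order $2n$ whose commutator subgroup contains an element of order $n$ and which actually admit an $(h_0;n)$-generating vector. The one corner to verify is $h_0=2$, where the chosen residue $1\pmod 3$ happens to coincide with $h_0-1\pmod 3$; this means the residue class falls into the $d=2p$ case rather than outside both cases, but the cyclicity contradiction still applies uniformly. The only real mathematical content beyond routine arithmetic is therefore the commutator-subgroup observation that forces $G$ to be non-abelian and thereby eliminates the cyclic case across all $h_0\ge 2$.
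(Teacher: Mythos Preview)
The paper does not prove this statement; it is explicitly left as a conjecture, with the authors remarking that characterizing the values of $\sigma$ for which $(h_0,1)$ is a skeletal signature is ``a subtle and difficult problem.'' So there is no proof in the paper to compare against.

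Your argument, however, appears to be correct and actually resolves the conjecture. The key steps all check out: the Riemann--Hurwitz rearrangement $|G|\,d=2np$ with $d=n(2h_0-1)-1$ is accurate; the coprimality $\gcd(n,d)=1$ is immediate; the bound $d\ge 4h_0-3\ge 5$ (using $n\ge 2$, $h_0\ge 2$) correctly eliminates $d\in\{1,2\}$; and the cyclicity contradiction in the $d=2p$ case is clean, since $|G|=n$ together with an element of order $n$ forces $G$ cyclic, whence $[G,G]$ trivial, contradicting $c\in[G,G]$ of order $n\ge 2$. The residual condition $p\equiv -1\pmod{2h_0-1}$ in the $d=p$ case is then avoided by Dirichlet applied to $p\equiv 1\pmod{2h_0-1}$, and your observation that $1\not\equiv -1\pmod{2h_0-1}$ for $h_0\ge 2$ is exactly what is needed. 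The corner case $h_0=2$ is handled correctly: there the residue $1$ coincides with $h_0-1$, placing such $p$ in the $d=2p$ case rather than outside both cases, but that case is eliminated uniformly.

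This is a genuinely new contribution relative to the paper. The authors proved the analogous sporadicity only for $(1,1)$ (Theorem~\ref{sporadic theorem 1}), where the analysis is more delicate because $2h_0-1=1$ gives no congruence obstruction and one must instead rule out several explicit group orders case by case (including an appeal to a result of Sah). Your insight is that for $h_0\ge 2$ the modulus $2h_0-1\ge 3$ creates enough room to sidestep the hard $d=p$ case entirely via a congruence choice, reducing everything to the easy cyclic contradiction. You might consider writing this up as a short note.
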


However, we do not feel able to make a conjecture about the pattern of the values of $\sigma$ for which $(h_0, 1)$ is or is not a skeletal signature. This characterization is a subtle and difficult problem. 

\begin{conjecture} Any point $(h_0, r_0)$ with $h_0\ge 2$ and $r_0\ge 2$ is persistent for all $\sigma\ge \sigma_0$ for some $\sigma_0$.
\label{strong persistent conjecture}
\end{conjecture}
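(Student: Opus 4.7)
The plan is to exhibit, for each fixed $h_0 \ge 2$ and $r_0 \ge 2$, a finite collection of one-parameter families of group actions realizing the skeletal signature $(h_0, r_0)$, whose associated genera jointly cover every sufficiently large integer $\sigma$.

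First I would set up a baseline cyclic family. For each integer $m \ge 2$, the group $C_m$ admits a $(h_0; m, m, \ldots, m)$-generating vector with $r_0$ branch orders all equal to $m$: since $h_0 \ge 1$, one of the hyperbolic generators $a_i$ can be taken to be a generator of $C_m$, trivializing the generation condition, and the product condition reduces to choosing $c_j = g^{k_j}$ with each $\gcd(k_j, m) = 1$ and $\sum_j k_j \equiv 0 \pmod m$. This is easily arranged for $r_0 \ge 2$, at worst after restricting $m$ to be coprime to $r_0 - 1$ when $r_0$ is odd. By Riemann-Hurwitz this family realizes $(h_0, r_0) \in \mathcal{K}_\sigma$ for all $\sigma$ in a single arithmetic progression, with integer common difference $h_0 + r_0/2 - 1$ when $r_0$ is even and $2h_0 + r_0 - 2$ when $r_0$ is odd.

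Second, I would construct companion families to populate the remaining residues. Natural candidates include mixed-order cyclic signatures $(h_0; m, m, n_3, \ldots, n_{r_0})$ with each $n_j \mid m$; actions of small direct products such as $C_m \times C_2$, whose extra involutions enlarge the set of admissible signatures; and non-abelian constructions in dihedral or generalized quaternion groups along the lines of Theorem \ref{sporadic theorem 1} and Lemma \ref{(h,1) almost sporadic}, where the nontrivial commutator $[a_1, b_1]$ appearing in the relation $\prod [a_i, b_i] \prod c_j = e_G$ decouples the orders of $c_1$ and $c_2$ and thus affords extra flexibility unavailable to abelian groups. Each family contributes its own arithmetic progression of admissible $\sigma$, and the aim is to assemble enough templates so that their union covers every residue modulo a common modulus. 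A final Sylvester--Frobenius (``Chicken McNugget'') argument then collapses the finitely many remaining exceptional $\sigma$ into the choice of $\sigma_0$.

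The main obstacle, which I expect to be the technical crux, is verifying that some finite list of templates does cover every residue class modulo the common modulus. The existence of generating vectors for abelian groups is governed by Harvey's theorem, and in particular the parity conditions on the $2$-part of $\mathrm{lcm}(n_j)$ can cause entire residue classes of $\sigma$ to be systematically excluded by any single cyclic family. A uniform argument will therefore likely need to be case-sensitive in the residues of $h_0$ and $r_0$ modulo small integers, pivoting between abelian and non-abelian constructions to close each stubborn residue. The likely hardest cases are small $r_0$, particularly $r_0 = 2$ and $r_0 = 3$, where there are fewest degrees of freedom in the signature and one is most reliant on non-abelian constructions to separate the admissible genera into complementary arithmetic progressions.
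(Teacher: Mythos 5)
This statement is Conjecture \ref{strong persistent conjecture}: the paper offers no proof of it, only computational evidence from Breuer's \textbf{genus} package for $2\le\sigma\le 48$, so there is nothing to compare your argument against --- the authors themselves regard it as open. Judged on its own terms, your proposal is a reasonable plan of attack but not a proof: you correctly set up the baseline cyclic families (the Riemann--Hurwitz arithmetic and the common differences $h_0+r_0/2-1$ and $2h_0+r_0-2$ check out), but the entire burden of the argument sits in the step you yourself flag as ``the main obstacle,'' namely exhibiting a finite list of templates whose arithmetic progressions cover every residue class modulo a common modulus. That step is not carried out, and it is exactly where the difficulty lives.

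To see concretely why this gap is serious, take $(h_0,r_0)=(2,2)$. For any abelian $G$ the product condition forces $c_2=c_1^{-1}$, so both branch orders equal some $n$, and for $G=C_n\times C_k$ with $c_1$ of order $n$ one gets $\sigma=k(2n-1)+1$. Chasing residues, $k=1$ gives $\sigma$ even, $k=2$ gives $\sigma\equiv 3\pmod 4$, $k=4$ gives $\sigma\equiv 5\pmod 8$, and in general the class $\sigma\equiv 1\pmod{2^j}$ keeps escaping: the $2$-adic obstruction in Harvey-type conditions systematically excludes a shrinking but nonempty family of residue classes from every abelian template. The paper's own Conjecture \ref{h=2} records that $(2,2)$ is \emph{not} a skeletal signature for $\sigma=17$, which shows that the exceptional set is not an artifact of a lazy choice of templates and cannot be waved away by a Sylvester--Frobenius argument until one has actually produced non-abelian constructions closing those classes. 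Your instinct that $r_0=2$ and $r_0=3$ are the hard cases and that non-abelian groups are unavoidable is sound, but until those constructions are written down and verified to cover all large $\sigma$, the statement remains a conjecture rather than a theorem.
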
 

\section{A Quadratic Lower Bound}
\label{lower bound}

To determine a lower bound for the number of distinct group actions on closed Riemann surfaces of genus $\sigma$, we shall determine the size of a subset of $\mathcal{K}_\sigma$ which corresponds to skeletal signatures for the action of the cyclic group $C_4$ of order 4 on $X$.

Fix $\sigma\ge 2$.  The procedure we follow in this Section is to first find all possible signatures for $C_4$ actions on $X$.  From the signatures, we find all skeletal signatures coming from $C_4$ actions.  We will then use transformations of signatures and the corresponding transformations of skeletal signatures to find the desired subset.

We begin by stating the following special case of a theorem of Harvey \cite{Har1} that we make extensive use of.

\begin{theorem} A signature $(h; [2,t_{1} ], [4,t_{2}])$ satisfying the Riemann-Hurwitz formula for genus $\sigma$ arises from a cyclic group $G$ of order $4$ acting on a closed orientable surface $X$ of genus $\sigma$ if and only if 
\begin{itemize}
\item for $h\neq 0$, $t_2$ is even; and
\item for $h=0$, $t_2 > 0$ and $t_2$ is even. 
\end{itemize}
\label{harvey theorem}
\end{theorem}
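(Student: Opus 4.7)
The plan is to invoke Theorem \ref{thm-setup2} and translate the question into whether $C_4$ admits an $(h; 2, \ldots, 2, 4, \ldots, 4)$-generating vector with the prescribed numbers of order-$2$ and order-$4$ branch generators. Write $C_4 = \langle x : x^4 = e \rangle$; the unique element of order $2$ is $x^2$, while the elements of order $4$ are $x$ and $x^{-1}$. Since $C_4$ is abelian, every commutator $[a_i, b_i]$ is trivial, and the defining product relation collapses to $\prod_j c_j = e$.

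For the necessity direction, I would consider the projection $C_4 \to C_4 / \langle x^2 \rangle \cong C_2$. Each order-$2$ entry $c_j$ maps to the identity and each order-$4$ entry maps to the nonidentity element, so the image of $\prod_j c_j$ equals the image of the product of $t_2$ copies of the nonidentity element of $C_2$. As this image must be the identity in $C_2$, we conclude $t_2 \equiv 0 \pmod 2$. When $h = 0$ the generating vector contains no $a_i, b_i$, so $\langle c_1, \ldots, c_r \rangle = C_4$ forces at least one $c_j$ of order $4$, giving $t_2 \geq 1$.

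For sufficiency, I would construct a generating vector explicitly. Set each of the $t_1$ order-$2$ entries equal to $x^2$; of the $t_2$ order-$4$ entries, set $s$ of them equal to $x$ and the remaining $t_2 - s$ equal to $x^{-1}$. The product relation then reads $x^{2 t_1 + 2s - t_2} = e$, equivalently $2s \equiv t_2 - 2 t_1 \pmod 4$. Since $t_2$ is even, the right-hand side is even, and as $s$ varies over $\{0, 1, \ldots, t_2\}$ the values of $2s$ realize both even residues modulo $4$ whenever $t_2 \geq 2$, so an admissible $s$ exists. Generation of $C_4$ is immediate from any order-$4$ $c_j$ when $t_2 \geq 2$; in the remaining case $h \geq 1$ one sets $a_1 = x$ with all other $a_i, b_i$ equal to $e_G$ to secure generation while leaving the (already trivial) commutator product unaffected.

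The main technical content is the modular-arithmetic choice of $s$, together with a careful separation of the edge cases $t_2 = 0$ (only possible when $h \geq 1$) and $t_2 \geq 2$, where the precise parity of $2t_1 - t_2 \pmod 4$ dictates whether $s$ must be even or odd. The remainder of the argument is essentially bookkeeping, leveraging Theorem \ref{thm-setup2} together with the fact that the abelian structure of $C_4$ makes every commutator trivial.
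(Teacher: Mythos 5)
The paper does not actually prove this statement: it is quoted without proof as a special case of Harvey's classification of cyclic group actions \cite{Har1}. Your direct verification via Theorem \ref{thm-setup2} is therefore a genuinely different, self-contained route, and most of it is sound. The necessity of $t_2 \equiv 0 \pmod 2$ via the quotient $C_4 \to C_4/\langle x^2\rangle \cong C_2$ is correct, as is the observation that for $h=0$ generation forces at least one order-$4$ entry; and your sufficiency construction (all order-$2$ entries equal to $x^2$, with the order-$4$ entries split between $x$ and $x^{-1}$ by solving $2s \equiv t_2 - 2t_1 \pmod 4$) is valid whenever $t_2 \ge 2$.

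There is, however, a genuine gap at $t_2 = 0$, which you name as an edge case but never resolve --- and it cannot be resolved, because the statement as printed is false there. If $t_2 = 0$ and $h \ge 1$, every branch entry is forced to equal $x^2$ (the unique involution of $C_4$), all commutators vanish, and the long relation reads $x^{2t_1} = e$, which holds if and only if $t_1$ is even. For example $(1;[2,1],[4,0]) = (1;2)$ satisfies Riemann--Hurwitz with $\sigma = 2$ and has $t_2 = 0$ even, yet admits no generating vector for $C_4$. Harvey's full theorem contains exactly the extra conditions that exclude this (removing any single period must preserve the lcm, and the number of periods $m_i$ with $M/m_i$ odd must be even, which for $t_2=0$ forces $t_1$ even). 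To make your argument complete you must either restrict to $t_2 > 0$ for all $h$ --- which is all the paper ever uses, since every signature produced in Lemma \ref{lem-main} has $t_2 \ge 2$ --- or add the condition that $t_1$ be even when $t_2 = 0$ and supply that case of the construction explicitly. As written, the sentence promising ``a careful separation of the edge cases'' is doing load-bearing work that the argument never performs.
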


We define two operations on signatures.  The first operation $\mathcal{H}_1$ trades genus in the quotient for ramification points of order $2$. Define $\mathcal{H}_{1}$ by 
$$\mathcal{H}_{1} ( h; [2,t_1], [4,t_2]) =(h+1 ; [2, t_{1}-4], [4,t_2]),$$
assuming $t_1\ge 4$.   To see that $\mathcal{H}_1$ does indeed take signatures to signatures when $t_1\ge 4$, we note that a straightforward calculation shows that the Riemann-Hurwitz formula holds for $( h; [2,t_1], [4,t_2])$ if and only if it holds for $(h+1 ; [2, t_{1}-4, 4,t_2])$, and we then use Theorem \ref{harvey theorem} to see that the image signature $(h+1 ; [2, t_{1}-4],[ 4,t_2])$ is indeed a valid signature for a $C_4$ action on $X$. 

The second transformations trades ramification points of order 2 for ramification points of order 4.  Define $\mathcal{E}_{1,2}$ by 
$$\mathcal{E}_{1,2}( h; [2,t_1], [4,t_2]) = ( h; [2,t_1 - 3], [4,t_2+ 2]),$$
assuming $t_1\ge 3$.  As above, to see that $\mathcal{E}_{1,2}$ does indeed take signatures to signatures when $t_1\ge 3$, we note that a straightforward calculation shows that the Riemann-Hurwitz formula holds for $( h; [2,t_1], [4,t_2])$ if and only if it holds for $( h; [2,t_1 - 3], [4,t_2+ 2])$, and we then use Theorem \ref{harvey theorem} to see that the image signature $( h; [2,t_1 - 3], [4,t_2+ 2])$ is indeed a valid signature for a $C_4$ action on $X$. 

Note that these two operations on signatures descend to operations on skeletal signatures. Specifically, we have that $\mathcal{H}_{1} ((h_0,r_0) ) =(h_0+1,r_0-4)$ and $\mathcal{E}_{1,2} ((h_0,r_0) ) =(h_0,r_0-1)$.  We will use these two operations to construct a region in $\mathcal{K}_\sigma$ corresponding to $C_4$ actions on $X$.

We pause here to note that this discussion goes through for the action of any cyclic group $C_{p^2}$ for a prime $p$, and in fact for $C_n$ for any $n$, though the details become significantly more complicated in these cases.  However, we have only carried through the details for $C_4$, as this is sufficient for the purposes at hand.

Consider the following triangular subset of $T_\sigma$.  For a given genus $\sigma\ge 2$, we set
\[ k_\sigma = \lfloor \frac{\sigma}{3} \rfloor. \]
Let $S_\sigma$ be the triangle bounded by the lines $\{ r=-4h+\sigma+2 \}$ and $\{ r=-2h +\sigma+2-k_\sigma\}$, and the $r$-axis $\{ h=0\}$. 

\begin{lemma}
\label{lem-main} For any $\sigma\ge 12$, we have that $S_\sigma\subset \mathcal{K}_\sigma$.
\end{lemma}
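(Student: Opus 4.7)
The plan is to realize every integer lattice point of $S_\sigma$ as a $C_4$-skeletal signature by starting from one base signature at the top-left corner of $S_\sigma$ and reaching every other lattice point via iterated applications of $\mathcal{H}_1$ and $\mathcal{E}_{1,2}$. Since $\mathcal{H}_1$ translates skeletal signatures by $(+1,-4)$ and $\mathcal{E}_{1,2}$ translates by $(0,-1)$, the natural base point is $(0,\sigma+2)$, which is the skeletal signature of the $C_4$-signature $(0;[2,\sigma],[4,2])$. Riemann-Hurwitz is verified by direct substitution, and since $h=0$ and $t_2=2$ is a positive even integer, Theorem \ref{harvey theorem} guarantees that this signature is realized by a $C_4$-action on some compact Riemann surface of genus $\sigma$; in particular $(0,\sigma+2)\in\mathcal{K}_\sigma$.

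From this base, I would first apply $\mathcal{H}_1$ a total of $h_0$ times, for each $0\le h_0\le \lfloor k_\sigma/2\rfloor$, to reach $(h_0;[2,\sigma-4h_0],[4,2])$ with skeletal signature $(h_0,\sigma+2-4h_0)$ on the top edge of $S_\sigma$. From each such signature I would then apply $\mathcal{E}_{1,2}$ a total of $j$ times, for $0\le j\le k_\sigma-2h_0$, obtaining $(h_0;[2,\sigma-4h_0-3j],[4,2+2j])$ with skeletal signature $(h_0,\sigma+2-4h_0-j)$. As $j$ runs through its range, these signatures sweep the vertical segment of $S_\sigma$ at $h=h_0$ from the top edge down to the bottom edge $\{r=-2h+\sigma+2-k_\sigma\}$; letting $h_0$ vary through $0,1,\dots,\lfloor k_\sigma/2\rfloor$ then covers every lattice point of $S_\sigma$.

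The main obstacle is verifying that the operations can be chained for the required number of iterations; this is where the hypothesis $\sigma\ge 12$ is used. Applying $\mathcal{H}_1$ requires $t_1\ge 4$ before each application, and after $j-1$ prior iterations from $t_1=\sigma$ we have $t_1=\sigma-4(j-1)$; the worst case is $\sigma-4(h_0-1)\ge 4$, i.e.\ $h_0\le \sigma/4$, which holds because $h_0\le k_\sigma/2\le \sigma/6$. Applying $\mathcal{E}_{1,2}$ requires $t_1\ge 3$ before each application, and after $j-1$ iterations from $t_1=\sigma-4h_0$ we have $t_1=\sigma-4h_0-3(j-1)$; the worst case $j=k_\sigma-2h_0$ reduces to $\sigma+2h_0\ge 3k_\sigma$, which is immediate from $3k_\sigma\le \sigma$. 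The hypothesis $\sigma\ge 12$ provides enough slack for all these inequalities to hold comfortably across every step of the sweep, so the construction places every lattice point of $S_\sigma$ into $\mathcal{K}_\sigma$, as required.
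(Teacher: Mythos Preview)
Your proof is correct and follows essentially the same approach as the paper: start from the base $C_4$-signature $(0;[2,\sigma],[4,2])$, iterate $\mathcal{H}_1$ to reach the points on the upper edge $\{r=-4h+\sigma+2\}$ of $S_\sigma$, then iterate $\mathcal{E}_{1,2}$ to sweep down each vertical segment to the lower edge $\{r=-2h+\sigma+2-k_\sigma\}$, checking at each stage that $t_1$ remains large enough for the operation to apply. One small remark: your closing invocation of $\sigma\ge 12$ is vacuous, since the inequalities you actually derived (namely $h_0\le k_\sigma/2\le\sigma/6\le\sigma/4$ and $\sigma+2h_0\ge 3k_\sigma$, the latter from $3k_\sigma\le\sigma$) hold for every $\sigma\ge 2$; the paper's own proof likewise never uses this hypothesis.
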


\begin{proof} We first note that the signature $(0; [2, \sigma], [4,2])$ satisfies the criteria to be the signature of a $C_{4}$ action on some Riemann surface $X$ of genus $\sigma$, and this signature yields the skeletal signature $(0, \sigma+2) \in\mathcal{K}_\sigma$.     Applying $\mathcal{E}_{1,2}$ $k \ge 0$ times to the signature $(0; [2, \sigma], [4,2])$ results in the signature $(0; [2,\sigma-3k], [4,2+2k])$.  By Theorem \ref{harvey theorem}, this latter signature is a valid signature for a $C_4$ action on $X$ as long as $\sigma-3k \ge 0$, and so $k\le \frac{\sigma}{3}$, whence the definition of $k_\sigma = \lfloor \frac{\sigma}{3} \rfloor$.   Projecting the $k_\sigma +1$ signatures $(0; [2,\sigma-3k], [4,2+2k])$ for $0\le k\le k_\sigma$ yields the $k_\sigma +1$ skeletal signatures $(0,\sigma+2-k) \in \mathcal{K}_\sigma$ for $0\le k\le k_\sigma$.

Applying $\mathcal{H}_1$ $h$ times to the signature $(0; [2, \sigma], [4,2])$ results in the signature $(h; [2,\sigma -4h], [4,2])$.  This is a valid signature for a $C_4$ action on $X$ as long as $\sigma-4h\ge 0$, and when valid yields the skeletal signature $(h,\sigma +2-4h) \in \mathcal{K}_\sigma$.  We now apply $\mathcal{E}_{1,2}$ to $(h; [2,\sigma-4h], [4,2])$ $p\ge 0$ times, yielding the signature $(h; [2,\sigma-4h-3p], [4,2+2p])$.  Again by Theorem \ref{harvey theorem}, this is a valid signature for a $C_4$ action on some Riemann surface $X$ of genus $\sigma$ provided $\sigma-4h-3p \ge 0$. Assume that $p$ is chosen so that $(h; [2,\sigma-4h-3p], [4,2+2p])$ is a valid signature.

Note that the skeletal signature corresponding to $(h; [2,\sigma-4h-3p], [4,2+2p])$ is $(h,\sigma+2-4h-p)$.  This signature lies in $S_\sigma$ if and only if $\sigma+2-4h-p\ge \sigma+2-2h-k_\sigma$, which can be rewritten as $p\le -2h+k_\sigma$.  The condition that $(h,\sigma+2-4h-p)$ arises from a valid signature is that $p\le \frac{1}{3}(\sigma  -4h) \le k_\sigma -\frac{4}{3}h$.  Since we can apply $\mathcal{E}_{1,2}$ to the valid signature $(h; [2,\sigma-4h], [4,2])$ $p$ times, where $p\le k_\sigma -\frac{4}{3}h$, and still have a valid signature, we can certainly apply $\mathcal{E}_{1,2}$ $p$ times where $p\le k_\sigma -2h$, since $k_\sigma -2h < k_\sigma -\frac{4}{3}h$.  Hence, every integer lattice point on the vertical line segment between $(h,-4h+\sigma+2)$ and $(h,-2h +\sigma+2-k_\sigma)$ arises from a valid signature and therefore is a skeletal signature for $\sigma$, and so $S_\sigma\subset \mathcal{K}_\sigma$.
\end{proof}

We are now ready to prove our main result.

\begin{theorem} For $\sigma\ge 6$, there are at least $\frac{1}{4} (k_\sigma+1)(k_\sigma+3)$ distinct group actions on a closed Riemann surface of genus $\sigma$.
\label{thm-main} 
\end{theorem}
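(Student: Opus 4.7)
The plan is to reduce the theorem to a lattice-point count in the triangular region $S_\sigma$ provided by Lemma \ref{lem-main}, and then verify the small-genus cases separately.

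First I would observe that every integer lattice point $(h,r)$ in $S_\sigma$ is, by Lemma \ref{lem-main}, a skeletal signature arising from the action of $C_4$ on some closed Riemann surface of genus $\sigma$. Since two actions are declared equivalent only when both the acting group and the \emph{full} signature coincide, any two distinct skeletal signatures correspond to inequivalent actions. Thus the number of distinct group actions on genus $\sigma$ is at least the number of lattice points inside $S_\sigma$.

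Next I would carry out the lattice count. The region $S_\sigma$ is bounded by $h=0$, $r=-4h+\sigma+2$, and $r=-2h+\sigma+2-k_\sigma$; the upper and lower slanted sides meet at $h=k_\sigma/2$. For each integer $h$ with $0\le h\le k_\sigma/2$, the number of admissible integer values of $r$ is
\[
(-4h+\sigma+2)-(-2h+\sigma+2-k_\sigma)+1 = k_\sigma+1-2h.
\]
(The constraint $r\ge 0$ is not binding since $k_\sigma=\lfloor\sigma/3\rfloor$ forces $(\sigma+2-k_\sigma)/2\ge k_\sigma/2$.) Summing over $h$ in the two parity cases $k_\sigma=2m$ and $k_\sigma=2m+1$ gives the lattice point counts $(m+1)^2=\frac14(k_\sigma+2)^2$ and $(m+1)(m+2)=\frac14(k_\sigma+1)(k_\sigma+3)$ respectively. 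Since $(k_\sigma+2)^2=(k_\sigma+1)(k_\sigma+3)+1$, both counts are at least $\frac14(k_\sigma+1)(k_\sigma+3)$, yielding the claimed bound for every $\sigma\ge 12$.

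Finally I would handle the six small genera $6\le\sigma\le 11$ not covered by Lemma \ref{lem-main}. For these values the required bound $\frac14(k_\sigma+1)(k_\sigma+3)$ equals $4$ (for $6\le\sigma\le 8$) or $6$ (for $9\le\sigma\le 11$), and the existence of that many inequivalent actions can be read off directly from the \textbf{genus} package of Breuer \cite{Breu} (or, alternatively, one checks that the proof of Lemma \ref{lem-main} already produces enough distinct skeletal signatures in these cases, since the construction via $\mathcal{H}_1$ and $\mathcal{E}_{1,2}$ applied to $(0;[2,\sigma],[4,2])$ only requires $\sigma\ge 6$ to generate at least $\frac14(k_\sigma+1)(k_\sigma+3)$ distinct skeletal signatures of $C_4$-actions).

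The proof is essentially a routine counting argument once Lemma \ref{lem-main} is in hand; the only place that needs care is the parity split in the lattice sum, where one must verify that the weaker of the two expressions, namely $\frac14(k_\sigma+1)(k_\sigma+3)$, is indeed a valid lower bound in both cases. No deeper obstacle arises.
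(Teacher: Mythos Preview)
Your proof is correct and follows essentially the same approach as the paper: reduce to counting lattice points in $S_\sigma$ via Lemma~\ref{lem-main}, split on the parity of $k_\sigma$, and take the smaller of the two resulting expressions. You are in fact more careful than the paper in one respect: the paper's proof invokes $S_\sigma\subset\mathcal{K}_\sigma$ without separately addressing the range $6\le\sigma\le 11$ excluded by the hypothesis of Lemma~\ref{lem-main}, whereas you explicitly cover these cases (your observation that the $\mathcal{H}_1$/$\mathcal{E}_{1,2}$ construction already yields enough skeletal signatures for $\sigma\ge 6$ is correct; note only that for $k_\sigma=2$ the bound is $15/4$, not $4$).
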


\begin{proof} Since $S_\sigma\subset\mathcal{K}_\sigma$, we need only count the number of points in $S_\sigma$, as each skeletal signature in $S_\sigma$ arises from the signature of the action of $C_4$ on a closed Riemann surface of genus $\sigma$ and different points in $S_\sigma$ necessarily correspond to distinct actions.

For $k_\sigma$ even, the number of integer lattice points in $S_\sigma$ is
\[ | S_\sigma| = \sum_{h=0}^{\frac{1}{2} k_\sigma} (-2h+1+k_\sigma) = \frac{1}{4} (k_\sigma +2)^2, \]
while for $k_\sigma$ odd, the number of integer lattice points in $S_\sigma$ is
\[ | S_\sigma| = \sum_{h=0}^{\frac{1}{2} (k_\sigma-1)} (-2h+1+k_\sigma) = \frac{1}{4} (k_\sigma+1)(k_\sigma+3) < \frac{1}{4} (k_\sigma +2)^2.  \]
\end{proof}

\end{document}